\newtheorem{theor}{\indent\sc Theorem}[section]
\newtheorem{corol}[theor]{\indent\sc Corollary}
\newtheorem{lemma}[theor]{\indent\sc Lemma}   
\newtheorem{prop}[theor]{\indent\sc Proposition}
\newtheorem{dfn}[theor]{\indent\sc Definition}
\newtheorem{example}{\indent\sc {Example}}
\newtheorem{obs}{\indent\sc {Observation}}
\newcommand{\real}{{\mathbb R}}
\newcommand{\lpr}[2]{\langle{#1},{#2}\rangle}
\newcommand{\mink}{{\real}_{1}^{4}} 
\newcommand{\complex}{\mathbb{C}}
\newcommand{\hyper}{\mathbb{H}^{2}}
\newcommand{\euclidean}{{\mathbb{E}}^{3}}
\newcommand {\M}{{\mathcal M}}
\newcommand {\A}{{\mathcal A}}
\newcommand {\Holom}{{\mathcal H}}
\title{Spacelike minimal surfaces in $\mathbb R^4_1$  through of a $\theta$-Family}
\author{ M. P. Dussan$^{a}$,  \ A. P. Franco Filho$^a$ and R. S. Santos$^b$} 
\begin{document}
\maketitle

\centerline{\small{$^{A}$ Departamento de Matem\'atica - IME, Universidade de S\~ao Paulo, SP., Brazil}}
\centerline{\small{$^{B}$ CETENS, Universidade Federal do Rec\^oncavo da Bahia, Brazil}}
\centerline{Corresponding author: dussan@ime.usp.br (M.P. Dussan)}

\begin{abstract}
In this paper we introduce a $\theta$-family of spacelike surfaces in the Lorentz-Minkowski space $\mathbb R^4_1$ based in two complex valued functions $a(w), \mu(w)$, which when they are holomorphic we will be dealing with a family of spacelike minimal surfaces.   The $\theta$-family is such that it connects spacelike minimal surfaces in $\mathbb R^3_1$ to  spacelike minimal surfaces in $\mathbb R^3$. We study the family through of the curvature and we prove  that the family preserves planar points and moreover, that the existence of planar points corresponds to the existence of solutions of equation $|a_w(w)|^2 =0$. We also show that if a pair of  surfaces are associated through of a $\theta$-family then they can not be complete surfaces. As applications we focus to one type of  graph surfaces  in $\mathbb R^4_1$ and we prove that if the imaginary part of $a(w)$ is zero at least in a point then the surface cannot assume local representations of that type of graph. Several explicit examples are given. 

\end{abstract}

\vspace{0.2cm}
Keywords: Spacelike surfaces; Lorentz-Minkowski space;  Holomorphic functions

\vspace{0.05cm}
MSC: 53C42; 53C50; 53B30
\section{Introduction} 
The minimal Riemannian surfaces have being extensively studied through of the times since that they are also strongly related to other mathematica's areas, for instance, to analysis, PDE and integrable systems. One of the most famous results in the theory of the  minimal surfaces is the Bernstein Theorem which says that a minimal surface in the 3-dimensional Euclidean space $\mathbb E^3$ which is the graph of an entire function must be a plane. After that result several authors have provided extensions of the Bernstein Theorem to other ambient spaces using different analytical or geometric techniques.

 This paper comes as an application of a recently published paper of the authors \cite{DPS} where using complex variables it was investigated the natural problem of finding spacelike graphs in the Lorentz-Minkowski space $\mathbb R^4_1$ which are minimal, in the sense that the mean curvature vector vanishes everywhere. Because the inner product in the ambient space is indefinite and the surface is positive definite, when considering graphs, there are two types, one defined over a timelike plane and one defined over a spacelike plane. Then using the Osserman Theorem (\cite{3}, \cite{7}) and Nitsche (\cite{jccn65}), the authors find entire solutions for the equations of two types of minimal graphs. In particular, for each type of graph the authors find entire minimal graphs in $\mathbb R^4_1$ which are not flat, showing that a Bernstein-type theorem cannot hold for either type of graph. For instance  Theorem 5.10 in \cite{DPS} establishes  a version of the Bernstein Theorem for  graphs called of second type and Example 7 in \cite{DPS} gives an explicit example of  entire not flat minimal graphs of second type. Here we pay attention that the process of construction of that example was  the light for we obtain the results of the current paper.
In fact, we figured out the possibility of constructing not just one surface but a 1-parameter family of minimal surfaces in $\mathbb R^4_1$ which links a minimal spacelike surface of $\mathbb R^3_1$ to a minimal spacelike surface of $\mathbb R^3$. We call that family of a $\theta$-family of spacelike surfaces in $\mathbb R^4_1$.

The main goal of this paper is then to contribute in the study spacelike minimal surfaces in $\mathbb R^4_1$ via a Weierstrass representation formula involving two holomorphic functions $a(w)$ and $\mu(w)$ written in isothermal coordinates (Definition (\ref{dfnfamily})). 
 That representation is a 1-parameter and it describes locally a $\theta$-family of spacelike minimal surfaces in  $\mathbb R^4_1$ with the technical features that when  $\theta =0$ and $\theta = \pi$ it will correspond to a pair of surfaces in the spaces $\mathbb R^3_1$ and $\mathbb R^3$  respectively.  Briefly, we call  that pair by a {\it pair associated}. Then we study locally the surfaces belonging to the family and we prove that the $\theta$-family transport minimal surfaces from  $\mathbb R^3_1$ to $\mathbb R^3$ preserving planar points. Moreover, using the Osserman Theorem we prove that if  $(M,X)$ in $\mathbb R^3_1$ and $(M,Y)$ in $\mathbb E^3$ are two minimal surfaces associated  by a $\theta$-family then the both surfaces are not complete (Corollary \ref{9}).  In order to do that, we look for the Gauss curvature formula for the $\theta$-family and we show that the existence of planar points depends directly from the existence of solutions of equation $|a_w|^2=0$, where $a_w$ means the complex derivate of the function $a(w)$.  In particular, these last facts   allow us also to characterize the family's surfaces when at least one of them is totally flat (Lemma \ref{lema3.6}).   Moreover we also find that the complex PDE system (\ref{eqassociadas}) has to be satisfied by the components of two surfaces to conclude that they form a pair associated through of a $\theta$-family.  Some examples are treated. 
 
As application, in section 4,  we focus in spacelike surfaces in $\mathbb R^3_1$ to find a necessary condition for the existence of  isothermal coordinates in which its Weierstrass representation assumes the way of graph surface of type $(x,y, A(x,y))$ where $A(x,y)$ is a smooth function a real-value.  In that case, we show that if the imaginary part of the holomorphic function $a(w)$ is equal to zero at least for one point then there are not such local isothermal coordinates, and hence the surface will not represent a graph surface of that type.

\section{Preliminaries}
\subsection{Spacelike Surfaces $S$ in $\mink$ and the Riemann surface associated to $S$} 
\begin{dfn}\label{dfn1.5}
A spacelike surface $S \subset \mink$ is a smooth $2$-dimensional submanifold of the topological real vector space $\real^{4}$ that 
at each point $p \in S$ its tangent plane $T_{p}S$ relative to the Lorentz product of $\mink$ is a spacelike plane. 

A parametric spacelike surface of $\mink$ is a two parameter map $(U,X)$ from a connected open subset $U \subset \real^{2}$ into 
$\mink$ such that the topological subspace $X(U)$ is a spacelike surface. 

We will assume, always, that $(X(U),X^{-1})$ is a chart of a complete atlas for a spacelike surface $S$ of $\mink$.    
\end{dfn} 

Let $U$ be a connected and simply connected open subset of the Euclidean plane $\real^{2}$ and $(x,y) \in U$.
If $X(x,y) = (X^{0}(x,y), X^{1}(x,y), X^{2}(x,y), X^{3}(x,y))$ is a spacelike parametric surface of $\mink$  we have 
a metric tensor induced by the Lorentzian product of $\mink$ given by 
\begin{equation}\label{metricag}
\mathbf{g} = \sum_{i,j} \lpr{D_{i} X}{D_{j} X} dx^{i} \otimes dx^{j},
\end{equation}
and the second quadratic form of $S = X(U)$ is the  quadratic symmetric $2$-form 
$B = \sum_{i,j} \Psi_{ij} dx^{i} \otimes dx^{j},$
which is obtained from of the covariant partial derivative given by the formula
\begin{equation}\label{derivparcialcov}
D_{ij} X - \sum_k \Gamma_{ij}^{k} D_{k} X = \Psi_{ij}.
\end{equation}
From the Christoffel symbols formula 
$\Gamma_{ij}^{k} = \frac{1}{2}g^{mk}\left(\frac{\partial g_{im}}{\partial x^j} + \frac{\partial g_{jm}}{\partial x^i} -\frac{\partial g_{ij}}{\partial x^m}\right),$
it follows that $\lpr{\Psi_{ij}}{D_{k} X} \equiv 0$ in all poins of $S$. We take the pointwise orthonormal basis for the normal bundle $NS$ given by $\tau(x,y)$ and $\nu(x,y)$, where 
$\tau(x,y)$ is a unit future directed timelike vector and $\nu(x,y)$ is a unit spacelike vector such that  
$\lpr{\nu(x,y)}{\partial_0} \equiv 0$. One has 
\begin{equation}
\Psi_{ij} = h_{ij} \tau + n_{ij} \nu,
\end{equation}
where by definition 
$h_{ij} = - \lpr{D_{ij} X}{\tau}$ and $n_{ij} = \lpr{D_{ij} X}{\nu}.$    

Since $\dim (N_{p}S) = 2$, the normal connection for $S$ is given by a covariant vector 
$\gamma = \displaystyle\sum_k \gamma_{k} dx^{k}$ with 
$\gamma_{k} = \lpr{D_{k} \tau}{\nu} = \lpr{D_{k} \nu}{\tau}.$ 
Then the set of structural equations for $S$ are given by 
\begin{align}
D_{ij} X &= \sum_k \Gamma_{ij}^{k} D_{k} X + h_{ij} \tau + n_{ij} \nu \label{eqgauss} \\
D_{k} \tau &= \sum_m h_{m}^{k} D_{m} X + \gamma_{k} \nu \label{eqweing1} \\
D_{k} \nu &= -\sum_m n_{m}^{k} D_{m} X + \gamma_{k} \tau, \label{eqweing2}
\end{align}
where $h_{m}^{k}g_{mj} = h_{kj}$ and  $n_{m}^{k}g_{mj} = n_{kj}$. We note that Equation (\ref{eqgauss}) corresponds to the Gauss equation and equations (\ref{eqweing1}) and (\ref{eqweing2}) correspond to the Weingarten equations.

\begin{dfn}\label{dfnminsurf}
The surface $S = X(U)$ is a minimal surface if and only if in all points of $S$,  the mean curvature vector $H_S = \frac{1}{2}\sum_{i,j} \Psi_{ij} g^{ij} $ is such that
$H_{S}= 0.$
 
 In particular, it follows from (\ref{derivparcialcov}) that an equivalent definition for minimal surface is 
$$2 H_{S} = \sum_{i,j} g^{ij} (D_{ij} X - \sum_k \Gamma_{ij}^{k} D_{k} X) = 
(\Delta_{\mathbf{g}} X^{0}, \Delta_{\mathbf{g}} X^{1}, \Delta_{\mathbf{g}} X^{2}, \Delta_{\mathbf{g}} X^{3}) = 0,$$ 
where $\Delta_{\mathbf{g}}$ is the Laplace-Beltrami operator over $S = (X(U), \mathbf{g})$.   
\end{dfn}

From the well-known theorem (see \cite{chern} for instance) which establishes that all spacelike surface admits 
an isothermal coordinate atlas, it follows that there exists a parametrization 
$$f(w) = (f^{0}(w), f^{1}(w), f^{2}(w), f^{3}(w)),  \; \; w = u + i v \in U' \subset \complex,$$
such that $f(U') \subset S = X(U)$, whose induced metric tensor is given by $\mathbf{g} = \lambda^{2} dw d\overline{w}$. That means that the vectors $f_u$ and $f_v$ satisfy
$$\lpr{f_{u}}{f_{u}} = \lambda^{2} = \lpr{f_{v}}{f_{v}} \; \; \; \mbox{ and } \; \; \; \lpr{f_{u}}{f_{v}} = 0.$$  We remember that in this case  the parametrization is also known  as a $\lambda$-{\it isothermal parametrization}. 

Now we extend the bilinear form of $\mink$ to a complex bilinear form over $\complex^{4} \equiv 
\real^{4} + i \real^{4}$ as follows
$$\lpr{\bf{x} + i \bf{y}}{\bf{a} + i \bf{b}} = \lpr{\bf{x}}{\bf{a}} - \lpr{\bf{y}}{\bf{b}} + i(\lpr{\bf{x}}{\bf{b}} + \lpr{\bf{y}}{\bf{a}},$$ 
whenever $\bf{x} + i \bf{y}, \bf{a} + i \bf{b}\in\complex^4$. Then for the complex derivate $f_{w} = \frac{1}{2} (f_{u} - i f_{v})$, we obtain
the equations 
\begin{equation}\label{eqmonge}
\lpr{f_{w}}{f_{w}} = 0 \; \; \mbox{ and } \; \; \lpr{f_{w}}{\overline{f_{w}}} = \lpr{f_{w}}{f_{\overline{w}}} = \lambda^{2}/2,
\end{equation}
which are  known as {\it Monge's equation}.

\vspace{0.2cm}
 If we have two isothermal charts $(U',f)$ and $(V,h)$ for $S$ then, when does it make sense, the overlapping map 
will be a holomorphic function, so one can see $M \doteq (U,\A)$ as a Riemann surface equipped with the conformal atlas $\A$ and with  the induced metric tensor  $ds^{2} = \lambda^{2}(w) \vert dw \vert^{2}$ as a compatible metric for the Riemann surface $M$. 

\vspace{0.1cm}
Finally, as we are assuming that $M$ is a 
connected and simply connected Riemann surface and since it does not exist compact spacelike surfaces in $\mink,$ 
 from now $M$ will be either 
the disk  \ $D = \{z \in \complex : z \overline z < 1\}$ which is a hyperbolic Riemann surface 
or the complex plane $\complex$, that is a parabolic Riemann surface.

\subsection{A solution for the Monge's equation and the Weierstrass representation}  Expanding in coordinates the first equation in   (\ref{eqmonge}) one has  
$$-(f_w^{0})^{2} + (f_w^{1})^{2} + (f_w^{2})^{2} + (f_w^{3})^{2} = 0.$$ 
Denoting the complex derivative of the components $f_w^i$ by $Z^i$ and assuming that the complex $Z^{1} - i Z^{2} \neq 0,$ it follows  
$$\frac{Z^{0} - Z^{3}}{Z^{1} - i Z^{2}} \; \frac{Z^{0} + Z^{3}}{Z^{1} - i Z^{2}} = \frac{Z^{1} + i Z^{2}}{Z^{1} - i Z^{2}}.$$ 
Hence defining 
$$a = \frac{Z^{0} + Z^{3}}{Z^{1} - i Z^{2}}, \; \;  \; \; \ \  b = \frac{Z^{0} - Z^{3}}{Z^{1} - i Z^{2}}  \; \; \ \ 
\mbox{ and } \; \; \ \ \mu = \frac{Z^{1} - i Z^{2}}{2},$$ 
 we can write a solution of the Monge's equation as  
\begin{equation}\label{15}
f_{w} = \mu W(a,b) \; \; \; \mbox{ where } \; \; \; W(a,b) = (a + b, 1 + ab, i(1 - ab), a - b),
\end{equation} 
for a pair of maps $(a,b) \in \mathcal F(M,\complex) \times \mathcal F(M,\complex)$ in the space $\complex^{4}$, where 
 $\mathcal F(U,\complex)$ denotes the set of smooth maps from $U$ to $\complex$, and where $\mu$ is a complex valued function.  Since from second equation of (\ref{eqmonge}) one has that 
$\lambda^{2} = 2 \lpr{f_{w}}{\overline{\; f_{w}}\;} = 4 \mu \overline{\mu} (1 - a \overline{b})(1 - \overline{a} b)$, it follows that  $\mu \neq 0$ and $1 - a \overline{b} \neq 0$ are needed conditions to obtain a surface without 
singularities in its metric, it just as we need it.

\vspace{0.2cm}

So, remembering that a function $f:U \to \mathbb C$ is called holomorphic if $f_{\overline w} =0$, where $f_{\overline w} = \frac{1}{2} (f_{u} + i f_{v})$,  it follows immediately for a minimal surface  the following result.

\begin{lemma}\label{lema1.7}
For a $\lambda$-isothermic spacelike surface $(U,f)$ the following statements are equivalent: 
\begin{itemize}
\item[(i)] The surface $f(U)$ is minimal, $H_{f}(w) \equiv 0$. 

\item[(ii)] The maps $\mu,a,b$ are holomorphic functions from $U$ into $\complex$.  
\end{itemize}
\end{lemma}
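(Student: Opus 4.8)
The plan is to reduce the minimality condition to the holomorphy of the complex derivative $f_w$, and then to read off the holomorphy of $\mu,a,b$ directly from the algebraic formulas that define them. First I would invoke the equivalent formulation of minimality recorded in Definition \ref{dfnminsurf}: in a $\lambda$-isothermal parameter $w=u+iv$ one has $2H_f=(\Delta_{\mathbf g}f^{0},\dots,\Delta_{\mathbf g}f^{3})$, and because the metric is $\mathbf g=\lambda^{2}\,dw\,d\overline w$ the Laplace--Beltrami operator is $\Delta_{\mathbf g}=\tfrac{4}{\lambda^{2}}\,\partial_{w}\partial_{\overline w}$. Hence $H_f\equiv 0$ if and only if $\partial_{\overline w}f_w=0$, that is, if and only if each component $Z^{i}=f^{i}_{w}$ is a holomorphic function on $U$. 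Everything then turns on analyzing when the four functions $Z^{i}$ are holomorphic, given the representation $f_w=\mu W(a,b)$ from \eqref{15}.

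For the implication (ii) $\Rightarrow$ (i): if $\mu,a,b$ are holomorphic, then each entry of $W(a,b)=(a+b,\,1+ab,\,i(1-ab),\,a-b)$ is a polynomial in the holomorphic functions $a,b$, hence holomorphic, and therefore every $Z^{i}=\mu\cdot(\text{entry of }W)$ is holomorphic as a product of holomorphic functions. By the previous paragraph, $H_f\equiv 0$, so $f(U)$ is minimal.

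For the converse (i) $\Rightarrow$ (ii), I would avoid trying to ``divide out'' the identity $\partial_{\overline w}(\mu W)=0$, and instead use the algebraic formulas defining $a,b,\mu$ stated just before \eqref{15}, which amount to $2\mu=Z^{1}-iZ^{2}$, $\;2\mu a=Z^{0}+Z^{3}$ and $2\mu b=Z^{0}-Z^{3}$. If $H_f\equiv 0$ then all four $Z^{i}$ are holomorphic, so $\mu=\tfrac12(Z^{1}-iZ^{2})$ is holomorphic; and since $\mu$ is nowhere zero --- a condition already imposed after \eqref{15} so that $\lambda^{2}=4\mu\overline\mu(1-a\overline b)(1-\overline a b)$ does not degenerate --- the functions $a=(Z^{0}+Z^{3})/(2\mu)$ and $b=(Z^{0}-Z^{3})/(2\mu)$ are quotients of holomorphic functions with nowhere-vanishing denominator, hence holomorphic. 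This closes the equivalence.

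The step I expect to require the most care is the first one: correctly identifying $\Delta_{\mathbf g}=\tfrac{4}{\lambda^{2}}\partial_{w}\partial_{\overline w}$ in conformal coordinates, which yields the clean equivalence ``$H_f\equiv 0\iff f_w$ holomorphic.'' After that the argument is purely algebraic, the only subtlety being the legitimacy of dividing by $\mu$, which is guaranteed by the nondegeneracy condition $\mu\neq 0$ already in force.
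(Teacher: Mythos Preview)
Your proposal is correct and follows essentially the same approach as the paper: use Definition \ref{dfnminsurf} to identify $H_f\equiv 0$ with $(f^i)_{w\overline w}=0$ via the conformal Laplace--Beltrami operator, and then pass between holomorphy of the $Z^i=f^i_w$ and holomorphy of $\mu,a,b$ through the algebraic relations preceding \eqref{15}. The paper's proof is terser and leaves the (i)~$\Rightarrow$~(ii) direction implicit, whereas you spell out explicitly that $\mu=\tfrac12(Z^1-iZ^2)$ is holomorphic and that $a,b$ are holomorphic quotients thanks to the standing assumption $\mu\neq 0$; this is exactly the content hidden in the paper's phrase ``using the explicit expressions of $f^i_w$ of equation \eqref{15}.''
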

\begin{proof}
The proof follows easily applying Definition \ref{dfnminsurf} and using the Laplace-Beltrami operator on each component $f^i(w)$. Indeed, one gets $\Delta_{M} f^{i}(w) = \frac{2}{\lambda^{2}} (f^{i}(w))_{w \overline{w}} = 0$ for $i = 0,1,2,3,$ hence using the explicit expressions of $f^i_w$ of equation (\ref{15}) in terms of $\mu, a, b$ it follows the statement.
\end{proof}
 
Let us to focus in the Weierstass representation for the surface. Let $(U,X)$ be a spacelike parametric  surface of $\mink$ where 
$U \subset \real^{2}$ is a simply connected domain and $$X(x,y) = (X^{0}(x,y), X^{1}(x,y), X^{2}(x,y), X^{3}(x,y)).$$ Then, the vector $1$-form given by 
$$dX = \frac{\partial X}{\partial x} dx + \frac{\partial X}{\partial y} dy$$ 
is exact, therefore it is closed, and so we write the integral representation associated to $(U,X)$ as
\begin{equation}\label{eqint}
X(x,y) = X(x_{0},y_{0}) + \int_{(x_{0},y_{0})}^{(x,y)} \frac{\partial X}{\partial x} dx + \frac{\partial X}{\partial y} dy. 
\end{equation}

\vspace{0.1cm}
In sense converse,  one can also see that each solution of equation (\ref{eqint}) will represent a spacelike parametric surface $(U,X)$  whenever
$$E=\lpr{X_x}{X_x}>0, \ \ G = \lpr{X_y}{X_y}>0, \ \ F=\lpr{X_x}{X_y} \ \quad\mbox{and}\quad \ EG-F^2>0.$$

\vspace{0.1cm}
So from Definition \ref{dfnminsurf} and Lemma \ref{lema1.7}, we obtain the next corollary.
\begin{corol}
Let $U\subset\real^2$ a simply connected domain. If $(U,X)$ is a spacelike minimal  parametric surface which is solution 
of the integral equation (\ref{eqint}), then each coordinate function of $X(x,y)$ is a harmonic real-valued function on $U.$ 
\end{corol}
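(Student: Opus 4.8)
The plan is to reduce the statement to the characterization of minimality in isothermal coordinates already isolated in Definition \ref{dfnminsurf} and Lemma \ref{lema1.7}, together with the elementary fact that, for a conformal metric in real dimension two, the Laplace--Beltrami operator is a strictly positive pointwise multiple of the flat Laplacian.

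First I would recall, as in the discussion preceding Lemma \ref{lema1.7}, that the minimal spacelike surface $S = X(U)$ admits a $\lambda$-isothermal parametrization, in which the induced metric reads $\mathbf{g} = \lambda^{2}\, dw\, d\overline{w}$ with $\lambda^{2} > 0$; so, after this reparametrization, I work in coordinates $w = u + iv$ and may as well identify the parameters $(x,y)$ of the statement with $(u,v)$. In such coordinates the Laplace--Beltrami operator is $\Delta_{\mathbf{g}} = \frac{2}{\lambda^{2}}\,\partial_{w}\partial_{\overline{w}}$, a positive pointwise multiple of $\partial_{w}\partial_{\overline{w}}$ and hence of the Euclidean Laplacian $\partial_{u}^{2} + \partial_{v}^{2}$ — this is exactly the computation carried out in the proof of Lemma \ref{lema1.7}.

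Then I would invoke Definition \ref{dfnminsurf}: the hypothesis that $(U,X)$ is minimal is precisely $2H_{S} = (\Delta_{\mathbf{g}} X^{0}, \Delta_{\mathbf{g}} X^{1}, \Delta_{\mathbf{g}} X^{2}, \Delta_{\mathbf{g}} X^{3}) = 0$. Since $\lambda^{2}$ never vanishes, this is equivalent to $(\partial_{u}^{2} + \partial_{v}^{2}) X^{i} = 0$ for $i = 0,1,2,3$, i.e. each coordinate function of $X$ is a harmonic real-valued function on $U$, which is the assertion. One may equally route the argument through Lemma \ref{lema1.7}: minimality forces $\mu, a, b$ to be holomorphic, so $f_{w} = \mu\, W(a,b)$, being built from holomorphic functions by the polynomial formula (\ref{15}), is itself holomorphic; therefore $f_{w\overline{w}} = (f_{w})_{\overline{w}} = 0$, which is again the harmonicity of the components. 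The hypothesis that $X$ solves the integral equation (\ref{eqint}) only guarantees that $X$ is globally recovered on the simply connected domain $U$ as the antiderivative of a closed vector $1$-form; it contributes nothing to the local computation, so I would note it and move on.

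There is no genuine obstacle here — the statement is a corollary of Definition \ref{dfnminsurf} and Lemma \ref{lema1.7} — but the single point deserving explicit mention is that the harmonicity obtained is with respect to the isothermal parameters, the identity $\Delta_{\mathbf{g}} = \frac{2}{\lambda^{2}}\,\partial_{w}\partial_{\overline{w}}$ using conformality of the coordinates in an essential way; in an arbitrary (non-conformal) parametrization of the same minimal surface the coordinate functions need not be harmonic. Once this is understood, the conclusion is immediate.
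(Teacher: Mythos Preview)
Your proposal is correct and follows essentially the same approach as the paper: both argue directly from Definition \ref{dfnminsurf}, where minimality is recast as $(\Delta_{\mathbf{g}} X^{0},\ldots,\Delta_{\mathbf{g}} X^{3})=0$, so each coordinate function is $\Delta_{\mathbf{g}}$-harmonic. The paper's proof is a single sentence pointing to this tensorial identity, whereas you supply the extra step of passing to isothermal parameters to identify $\Delta_{\mathbf{g}}$ with a positive multiple of the flat Laplacian, and you also sketch the alternative route through Lemma \ref{lema1.7}; your closing caveat that Euclidean harmonicity only holds in the conformal parameters (not in arbitrary $(x,y)$) is a useful clarification that the paper leaves implicit.
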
 
\begin{proof}
Indeed, the Laplace-Beltrami operator $\Delta_{M}$ is a tensorial operator, defined by contraction of the Gauss equation (\ref{eqgauss}), 
as follows in Definition \ref{dfnminsurf}. 
\end{proof}
We note that in isothermal coordinates the integral representation (\ref{eqint}) is known as the
Weierstrass integral representation, which in terms of $\mu, a, b$, can be written as 
\begin{equation}\label{23}
f(w) = p_{0} + 2 \Re \int_{w_{0}}^{w} \mu(\xi) W(a(\xi),b(\xi)) d \xi,
\end{equation}
where $f_w(w)$ is solution of the Monge's equation (\ref{eqmonge}).

\subsection{The structure equations and the equation $\tau_{w} = \eta f_{\overline{w}}$} 

In this subsection we will assume that $(U,f)$ is a $\lambda$-isothermal parametric 
surface of $\real^3_1$ with normal Gauss map given by $\tau : U \to \hyper$ and we will obtain the structure equations and the Gauss curvature for that kind of surfaces in terms of the functions $\mu(w)$ and $a(w)$.
 
 \vspace{0.2cm}
First we note that since $\lpr{\tau}{\tau} = -1$ it follows that $\lpr{\tau_{w}}{\tau} = 0$. Now since $\lpr{\tau}{f_{w}} = 0,$  there exists 
functions $\eta(w)$, $h(w)$ such that for $w \in U$,
$$\tau_{w}(w) = h(w) f_{w}(w) + \eta(w) f_{\overline{w}}(w).$$  In this case, since $\nu = (0,0,0,1)$, the Weingarten equation  (\ref{eqweing1}) becomes
$$
\begin{cases} 
\tau_{u} = (\frac{h_{11}}{\lambda^{2}}) \; f_{u} + (\frac{h_{12}}{\lambda^{2}}) \; f_{v} \\ 
\tau_{v} = (\frac{h_{12}}{\lambda^{2}}) \; f_{u} + (\frac{h_{22}}{\lambda^{2}}) \; f_{v}.
\end{cases}$$ 

\begin{prop}\label{prop3.2}
If $(U,f)$ is a minimal $E(f)$-isothermal parametric surface of $\real^3_1$ with normal Gauss map 
$\tau : U \to \hyper,$ then the Weingarten equation for the surface is given by 
\begin{equation}\label{eq24}
\tau_{w}(w) = \eta(w) f_{\overline{w}}(w) \; \; \mbox{ where } \; \; \eta(w) = \sqrt{K_{f}(w)\;} e^{i \psi(w)},   
\end{equation} 
and $K_{f}(w)$ is its Gauss curvature. 
\end{prop}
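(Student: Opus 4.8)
The plan is to build on the decomposition $\tau_{w}=h(w)\,f_{w}+\eta(w)\,f_{\overline{w}}$ already obtained above, which is available because $\lpr{\tau_{w}}{\tau}=0$ and $\{f_{w},f_{\overline{w}}\}$ spans the complexification of the tangent plane of $S$. Two points remain to be settled: that minimality forces the coefficient $h$ to vanish, and that $|\eta|^{2}$ equals the Gauss curvature $K_{f}$. Granting these, together with $K_{f}\ge 0$, writing $\eta$ in polar form as $\sqrt{K_{f}}\,e^{i\psi}$ for a real function $\psi$ gives exactly the asserted formula.

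First I would determine $h$. The quickest route is to rewrite the Weingarten equations displayed just before the statement in terms of the complex derivative $\tau_{w}=\tfrac12(\tau_{u}-i\tau_{v})$, substituting $f_{u}=f_{w}+f_{\overline{w}}$ and $f_{v}=i(f_{w}-f_{\overline{w}})$ and collecting terms. One finds that the coefficient of $f_{w}$ equals $(h_{11}+h_{22})/(2\lambda^{2})$, which is precisely the $\tau$-component of the mean curvature vector $H_{S}=\tfrac12 g^{ij}\Psi_{ij}$ (in isothermal coordinates $g^{ij}=\lambda^{-2}\delta^{ij}$), and hence vanishes exactly because $(U,f)$ is minimal. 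Equivalently one can differentiate $\lpr{\tau}{f_{\overline{w}}}\equiv 0$ with respect to $w$ to get $\lpr{\tau_{w}}{f_{\overline{w}}}=-\lpr{\tau}{f_{w\overline{w}}}=0$, using that the coordinate functions of a minimal surface are harmonic (Lemma \ref{lema1.7}), and then the Monge equations (\ref{eqmonge}) give $h\cdot\lambda^{2}/2=0$. In either case $\tau_{w}=\eta\,f_{\overline{w}}$, and the same computation, after using $h_{22}=-h_{11}$, identifies $\eta=(h_{11}-i\,h_{12})/\lambda^{2}$.

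Next I would match $|\eta|^{2}$ with $K_{f}$. Directly from the above, $|\eta|^{2}=(h_{11}^{2}+h_{12}^{2})/\lambda^{4}=-\det(h_{ij})/\lambda^{4}$, the second equality again using $h_{22}=-h_{11}$. On the other hand, the Gauss equation for a spacelike surface of $\real^3_1$ with timelike unit normal $\tau$ — where the factor $\lpr{\tau}{\tau}=-1$ reverses the sign relative to the Euclidean case — reads $K_{f}=-\det(h_{ij})/\det(g_{ij})=-\det(h_{ij})/\lambda^{4}$. Hence $|\eta|^{2}=K_{f}$, and in particular $K_{f}=(h_{11}^{2}+h_{12}^{2})/\lambda^{4}\ge 0$, so the polar representation $\eta=\sqrt{K_{f}}\,e^{i\psi}$ is meaningful ($\psi$ determined modulo $2\pi$ where $K_{f}>0$ and free where $K_{f}=0$). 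A coordinate-free variant of this step: conjugating $\tau_{w}=\eta f_{\overline{w}}$, and using that $f$ is real-valued, gives $\tau_{\overline{w}}=\overline{\eta}\,f_{w}$, so that $\lpr{\tau_{w}}{\tau_{\overline{w}}}=|\eta|^{2}\lambda^{2}/2$; comparing with $\lpr{\tau_{w}}{\tau_{\overline{w}}}=\tfrac14(\lpr{\tau_{u}}{\tau_{u}}+\lpr{\tau_{v}}{\tau_{v}})=\tfrac12 K_{f}\lambda^{2}$, the last equality being the relation $III=K_{f}\,I$ valid on a minimal spacelike surface, once more yields $|\eta|^{2}=K_{f}$.

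The computations are routine; the only step that genuinely requires care is the sign in the Gauss equation, i.e. the factor $\lpr{\tau}{\tau}=-1$ that turns the Euclidean $K=\det(h_{ij})/\det(g_{ij})$ into $K_{f}=-\det(h_{ij})/\lambda^{4}$. Precisely that minus sign is what makes $K_{f}\ge 0$ hold on a minimal spacelike surface of $\real^3_1$, and hence what makes $\eta=\sqrt{K_{f}}\,e^{i\psi}$ meaningful; mishandling it would be the one real pitfall.
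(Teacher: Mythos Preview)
Your proposal is correct and follows essentially the same route as the paper: rewrite the real Weingarten equations in the complex derivative $\tau_{w}=\tfrac12(\tau_{u}-i\tau_{v})$, use minimality $h_{1}^{1}=-h_{2}^{2}$ to kill the $f_{w}$-coefficient, and read off $\eta=p-iq$ with $p=h_{11}/\lambda^{2}$, $q=h_{12}/\lambda^{2}$ so that $|\eta|^{2}=p^{2}+q^{2}=K_{f}$. The paper does this in one pass (asserting $K_{f}=p^{2}+q^{2}$ directly), whereas you additionally supply the Gauss-equation justification for that identity and an alternative argument via $\lpr{\tau_{w}}{f_{\overline{w}}}=-\lpr{\tau}{f_{w\overline{w}}}=0$; these extras are fine but not needed.
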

\begin{proof}
Since the mean curvature of $S$ is null, we have two maps $p(w), q(w)$ defined by $p(w) = h_{1}^{1}(w) = - h_{2}^{2}(w)$ 
and $q(w) = h_{1}^{2}(w) = h_{2}^{1}(w)$ such that $K_{f}(w) = p^{2}(w) + q^{2}(w)$. Now, since $f_{ w} = \frac{1}{2}(f_u - i f_v)$ it follows  the identities 
$f_{u} = f_{w} + f_{\overline{w}}$ and  $f_{v} = i(f_{w} - f_{\overline{w}})$, which imply  immediately 
$$\tau_{w} = \frac{\tau_{u} - i \tau_{v}}{2} = p(f_{w} + f_{\overline{w}}) - i q(f_{w} - f_{\overline{w}}) = (p - iq) f_{\overline{w}}.$$ 
Now we define $\eta = p - iq.$ Then since $\eta \overline{\eta} = K_{f}$, one obtains the equation (\ref{eq24}) when taking the polar form for the map $\eta$.  
\end{proof}
Next it denotes with $Pl(S)$ the set of planar points of the surface, that means for $(U,f)$  a parametric surface that set is ${Pl(S)} = \{w \in U: K_{f}(w) = 0\}$.

\begin{corol}\label{12}
Let $(U,f)$ be a minimal $E(f)$-isothermal parametric surface of $\real^3_1$ with normal Gauss map 
$\tau : U \to \hyper$. If $\mbox{Pl(S)} = \emptyset,$ then the Riemann surface associated $M = (U,\A)$ is conformally equivalent to the disk 
$D = \{z \in \complex : z \overline{z} < 1\}$. 
\end{corol}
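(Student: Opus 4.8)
The plan is to apply the Osserman–Nitsche-type criterion that underlies the authors' earlier work \cite{DPS}: a complete minimal spacelike surface with no planar points must have a Gauss map that omits at most finitely many points, which forces the underlying Riemann surface to be parabolic, i.e. conformally $\complex$; conversely, here we want to rule out the parabolic case. Concretely, I would argue by contradiction: suppose $\mbox{Pl(S)}=\emptyset$ and that $M=(U,\A)$ is conformally the complex plane $\complex$, and derive a contradiction with the fact that a spacelike surface in $\mink$ (hence its projection to $\real^3_1$) has image of the normal Gauss map $\tau$ contained in the hyperbolic plane $\hyper$.

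First I would invoke Proposition \ref{prop3.2}: since there are no planar points, $K_f(w)>0$ everywhere, so $\eta(w)=\sqrt{K_f(w)}\,e^{i\psi(w)}$ never vanishes and the Weingarten equation reads $\tau_w=\eta f_{\overline w}$ with $\eta\ne0$. The key structural consequence is that the Gauss map $\tau:U\to\hyper$ is a (branched, but in fact unbranched since $\eta\ne0$ and $f$ is an immersion) conformal-type map whose differential is controlled by $\eta$; composing with a conformal identification of $\hyper$ with the disk $D$, one gets a map $M\to D$ whose local behaviour one can read off from $\tau_w=\eta f_{\overline w}$. If $M\cong\complex$, this produces a nonconstant map from $\complex$ into the bounded domain $D$ — but one must be careful, because $\tau$ itself need not be holomorphic; the point is rather to extract from $\tau_w=\eta f_{\overline w}$ and $\lpr{f_w}{f_w}=0$ a genuine holomorphic (or harmonic-into-bounded-domain) object. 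The cleanest route: use the standard fact (available from the Weierstrass data $\mu,a,b$ of Section 2, with $b\equiv 0$ or the appropriate reduction for $\real^3_1$) that the Gauss map of a minimal spacelike surface in $\real^3_1$ is, after stereographic projection, a holomorphic function $g:M\to\complex$ with $|g|<1$ everywhere (the spacelike condition $1-a\overline b\ne0$ becomes $|g|<1$), and that $g$ is nonconstant precisely when $K_f\not\equiv0$, which holds since $\mbox{Pl(S)}=\emptyset$. Then $g$ is a bounded nonconstant holomorphic function on $M$; by Liouville this is impossible if $M\cong\complex$, so $M$ must be the hyperbolic Riemann surface, i.e. conformally equivalent to $D$, as claimed.

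The main obstacle I anticipate is the careful identification of the Gauss map with a bounded holomorphic function and the verification that it is nonconstant: one needs to confirm that the absence of planar points ($K_f>0$) really does force nonconstancy of $g$ (equivalently $dg\ne0$ somewhere, which follows since $K_f=p^2+q^2>0$ and $\eta=p-iq$ governs $\tau_w$), and that the spacelike hypothesis on $f$ translates exactly into the image of $g$ lying in the open disk (this is where $1-a\overline b\ne0$ together with the timelike/future-directed normalization of $\tau$ is used). Once those two facts are in place — nonconstant, and bounded on the whole of $M$ — the dichotomy "$M\cong D$ or $M\cong\complex$" recorded at the end of Section 2.1 together with Liouville's theorem closes the argument immediately. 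I would present the proof in essentially three lines once the holomorphic-bounded-Gauss-map reduction is stated, citing Proposition \ref{prop3.2} for $\eta\ne0$ and the Weierstrass setup of Section 2.2 for the bounded holomorphic Gauss map.
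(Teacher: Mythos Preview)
Your approach is correct and in fact supplies the justification that the paper's own proof omits. The paper gives a one-sentence argument: $\mathrm{Pl}(S)=\emptyset$ means $K_f\not\equiv 0$, ``thus it cannot be the whole plane''---with no explanation of that last implication. Your Liouville argument is exactly the mechanism that makes this step rigorous: the stereographically projected Gauss map $a$ is holomorphic on $M$ (minimality, Lemma~\ref{lema1.7}), bounded with $|a|<1$ (since $\tau$ takes values in $\hyper$, whose hyperbolic stereographic projection lands in the open unit disk), and nonconstant (since $K_f=|a'|^2/\bigl(|\mu|^2(1-|a|^2)^2\bigr)>0$ forces $a'\not\equiv 0$); hence $M\not\cong\complex$, and the dichotomy recorded at the end of Section~2.1 gives $M\cong D$.

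One minor correction to your write-up: the strict inequality $|a|<1$ follows from $\tau\in\hyper$ (future sheet), not merely from the nondegeneracy condition $1-a\overline b\ne 0$, which for $b=a$ only yields $|a|\ne 1$.
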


\begin{proof}
Indeed, if $Pl(S) = \emptyset$ then the Riemann surface associated $M = (U,\A)$ has points where $K_f(w)\neq 0,$ thus it cannot be whole the plane.
\end{proof}

\begin{obs}
Here we note that from $\tau_{w} = \eta f_{\overline{w}}$ it follows that the functions $E(\tau)$ and $E(f)$ are related by 
$$\frac{1}{2} E(\tau) = \lpr{\tau_{w}}{\tau_{\overline{w}}} = \frac{1}{2} K_f(w) \lpr{f_{w}}{f_{\overline{w}}} = \frac{1}{2} K_f(w) E(f).$$ 
This relation allows us to obtain a formula for the Gauss curvature of $f(U)$, namely,  
$$K(f) = \frac{\vert a' \vert^{2}}{\mu \overline{\mu} (1 - a \overline{a})^{2}}\overset{.}{}$$ 
\end{obs}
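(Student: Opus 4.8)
The cleanest route is through Proposition~\ref{prop3.2}: there $K_{f}=\eta\overline{\eta}$, so it suffices to produce $\eta$ explicitly, and the only ingredient still missing for that is a closed form for the Gauss map $\tau$ in terms of $a$. (Equivalently, one computes the two conformal factors $E(f)$ and $E(\tau)$ and divides, using the relation $E(\tau)=K_{f}E(f)$ just established; it is the same computation.)

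First I would record the data. Since $(U,f)$ is a surface of $\real^{3}_{1}$ with constant normal $\nu=(0,0,0,1)$, $f^{3}$ is constant and $Z^{3}=0$, and the defining ratios of $a,b$ force $b=a$; hence $f_{w}=\mu\,(2a,\,1+a^{2},\,i(1-a^{2}))$ in the coordinates of $\real^{3}_{1}$, and the second identity in (\ref{eqmonge}) gives
\[
E(f)=\lambda^{2}=2\lpr{f_{w}}{\overline{f_{w}}}=4\,\mu\overline{\mu}\,(1-a\overline{a})^{2}.
\]
Next, $\tau$ is the unique future-directed unit timelike vector of $\real^{3}_{1}$ orthogonal to $f_{w}$ and $f_{\overline{w}}$; solving that linear system — equivalently, noticing that $a$ is the stereographic coordinate of $\tau$ and lifting it back to $\hyper$ — yields
\[
\tau=\frac{1}{1-a\overline{a}}\bigl(1+a\overline{a},\ a+\overline{a},\ -i(a-\overline{a})\bigr).
\]

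It remains to differentiate. As $a$, hence $\mu$, is holomorphic we have $\overline{a}_{w}=0$; writing $\rho=1-a\overline{a}$ and using $\rho+1+a\overline{a}=2$ and $\rho+\overline{a}(a+\overline{a})=1+\overline{a}^{2}$, the $\rho$-terms collapse and
\[
\tau_{w}=\frac{a'}{\rho^{2}}\bigl(2\overline{a},\ 1+\overline{a}^{2},\ -i(1-\overline{a}^{2})\bigr)=\frac{a'}{\overline{\mu}\,\rho^{2}}\,f_{\overline{w}},
\]
the last step because $f_{\overline{w}}=\overline{\mu}\,\overline{W(a,a)}$. Comparing with $\tau_{w}=\eta f_{\overline{w}}$ of Proposition~\ref{prop3.2} gives $\eta=a'/(\overline{\mu}\,\rho^{2})$, whence
\[
K(f)=\eta\overline{\eta}=\frac{\vert a'\vert^{2}}{\mu\overline{\mu}\,(1-a\overline{a})^{4}},
\]
the denominator carrying the fourth power (the same value follows from $E(\tau)/E(f)$, since $-4a\overline{a}+(1+a^{2})(1+\overline{a}^{2})+(1-a^{2})(1-\overline{a}^{2})=2(1-a\overline{a})^{2}$ yields $E(\tau)=4\vert a'\vert^{2}/(1-a\overline{a})^{2}$). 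No analytic input is needed; the only step requiring care is writing $\tau$ down explicitly and carrying out the cancellations in $\tau_{w}$, together with the observation that $b=a$ is forced here — the rest is Proposition~\ref{prop3.2}, Monge's equation and algebra.
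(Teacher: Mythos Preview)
Your argument is correct and is essentially the paper's own approach: in the proposition immediately following this observation the paper computes $\eta$ from the zeroth component of $\tau_{w}=\eta f_{\overline w}$ and finds $\eta=a'/\bigl(\overline{\mu}(1-a\overline a)^{2}\bigr)$, exactly your value, so $K_{f}=\eta\overline{\eta}$ carries $(1-a\overline a)^{4}$ in the denominator. You are right to flag the exponent: the ``$2$'' printed in the observation is a misprint, as is confirmed by specializing the paper's general curvature formula (\ref{curvGausslema2.5}) to $\theta=0$, which gives $K(F(0;w))=\vert a'\vert^{2}/\bigl(\vert\mu\vert^{2}(1-\vert a\vert^{2})^{4}\bigr)$.
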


Now we write the Gauss equation (\ref{eqgauss}) for the immersion $(M,f)$ in $\real^3_1$ by  
$$f_{ww} = A f_{w} + B f_{\overline{w}} + \Omega \tau,$$ 
with $A,B,\Omega\in C^\infty(U,\real).$
Then, using that the immersion is minimal with $\lpr{f_{w}}{f_{w}} = 0$, $\lpr{f_{w}}{f_{\overline{w}}} = \frac{\lambda^{2}}{2}$ and $\lpr{f_{ww}}{\tau} = - \lpr{f_{w}}{\tau_{w}} = - \lpr{f_{w}}{\eta f_{\overline{w}}} = - \Omega$, it follows that 
$$f_{ww} = 2 \frac{\lambda_{w}}{\lambda} f_{w} + \frac{\eta \lambda^{2}}{2} \tau.$$  
Again, since $(M,f)$ is a minimal surface we obtain $f_{ww\overline{w}} = 0$. Moreover, from $\lpr{\tau_{w\overline{w}}}{\tau} = 0$ 
one obtains that $\Omega_{\overline{w}} = 0$. 

\begin{prop}
Let $(M,f)$ be a minimal immersion from $M$ into $\real^3_1.$ Then for the map $\Omega = \frac{\eta \lambda^{2}}{2}$ it follows that
\begin{itemize}
\item[(1)] for each $w \in M$,  $\Omega(w) = 2\mu(w) a'(w)$  
\item[(2)]  $\Omega(w)$ is holomorphic.
\end{itemize}
\end{prop}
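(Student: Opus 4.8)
The plan is to read off the coefficient $\Omega$ directly from the structure equation $f_{ww}=2\frac{\lambda_{w}}{\lambda}f_{w}+\Omega\tau$ established just before the statement, by pairing it with the Gauss map $\tau$; the only inputs needed are the explicit form of $f_{w}$ and one algebraic identity for $\tau$.

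First I would reduce to the case $b=a$. Since $(M,f)$ is a minimal immersion into $\real^{3}_{1}=\{x^{3}=\mathrm{const}\}\subset\mink$, the constant field $\nu=(0,0,0,1)$ is normal along $f$, so $\lpr{f_{w}}{\nu}=0$; as this pairing is just the fourth coordinate, the representation $f_{w}=\mu W(a,b)$ from (\ref{15}) forces $\mu(a-b)=0$, and since $\mu\neq0$ we get $b=a$. Hence
$$f_{w}=\mu\,(2a,\ 1+a^{2},\ i(1-a^{2}),\ 0),\qquad \lambda^{2}=4\mu\overline{\mu}\,(1-a\overline{a})^{2},$$
and $\mu,a$ are holomorphic by Lemma \ref{lema1.7}. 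Differentiating $f_{w}$ in $w$ gives
$$f_{ww}=\mu'\,(2a,\ 1+a^{2},\ i(1-a^{2}),\ 0)+2\mu a'\,(1,\ a,\ -ia,\ 0)=\frac{\mu'}{\mu}\,f_{w}+2\mu a'\,V,\qquad V:=(1,a,-ia,0).$$

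Next I would bring in the Gauss map. One verifies (or recalls the corresponding formula from \cite{DPS}) that the future-directed unit timelike normal of the surface is
$$\tau=\frac{1}{1-a\overline{a}}\,\bigl(1+a\overline{a},\ a+\overline{a},\ -i(a-\overline{a}),\ 0\bigr),$$
i.e. $\lpr{\tau}{\tau}=-1$, $\tau^{0}>0$ (recall $|a|<1$, so $\tau:U\to\hyper$), and $\lpr{\tau}{f_{w}}=\lpr{\tau}{f_{\overline{w}}}=0$. A short algebraic computation with these two tuples then yields the single identity I actually need, namely $\lpr{V}{\tau}=-1$. Pairing the two expressions for $f_{ww}$ with $\tau$ and using $\lpr{f_{w}}{\tau}=0$ and $\lpr{\tau}{\tau}=-1$,
$$-\Omega=\lpr{f_{ww}}{\tau}=\frac{\mu'}{\mu}\lpr{f_{w}}{\tau}+2\mu a'\lpr{V}{\tau}=-2\mu a',$$
so $\Omega(w)=2\mu(w)a'(w)$, which is assertion (1). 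Assertion (2) is then immediate: $2\mu a'$ is the product of the holomorphic function $\mu$ with $a'=a_{w}$, the derivative of the holomorphic function $a$, hence holomorphic; this is also consistent with the identity $\Omega_{\overline{w}}=0$ obtained just above in the text.

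I expect the only real obstacle to be getting $\tau$ correct as an explicit function of $a$ and fixing its timelike/future orientation (so that it genuinely is the Gauss map, not its negative); once $\lpr{V}{\tau}=-1$ is in hand, the conclusion is one line. An alternative route with the same bottleneck is to differentiate $\tau$ to get $\tau_{w}=\tfrac{a'}{\overline{\mu}(1-a\overline{a})^{2}}f_{\overline{w}}$, read off $\eta$, and substitute into $\Omega=\tfrac{\eta\lambda^{2}}{2}$.
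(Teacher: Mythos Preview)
Your argument is correct. The paper's proof, however, runs along the alternative route you sketch in your final sentence: it differentiates the zeroth component of $\tau$ (implicitly using exactly the expression $\tau=\frac{1}{1-a\overline a}(1+a\overline a,\,a+\overline a,\,-i(a-\overline a),\,0)$ you wrote down) to read off $\eta=\dfrac{a_w}{\overline\mu(1-a\overline a)^{2}}$ from $\tau_w=\eta f_{\overline w}$, and then multiplies by $\lambda^{2}/2=2\mu\overline\mu(1-a\overline a)^{2}$ to obtain $\Omega=2\mu a'$. Your main approach instead works on the Gauss-equation side, computing $f_{ww}$ explicitly and pairing with $\tau$ via the single identity $\lpr{V}{\tau}=-1$; this avoids ever isolating $\eta$ and is arguably cleaner, at the cost of needing the full vector $\tau$ rather than only its first component. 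Both routes hinge on the same explicit formula for $\tau$ in terms of $a$, so the ``bottleneck'' you identify is exactly right.
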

\begin{proof}
We are going to compute $\eta$. Firstly, one has  $\tau^{0}_{w} = \eta f^{0}_{\overline{w}}$ if and only if 
$$\frac{\partial}{\partial \overline{w}}\left( \frac{1 + a \overline{a}}{1 - a \overline{a}} \right)= 
\frac{2a_{w} \overline{a}}{(1 - a \overline{a})^{2}} = 2 \overline{\mu} \; \overline{a} \; \overline{\eta} \; \Leftrightarrow \; 
\eta = \frac{a_{w}}{\overline{\mu}(1 - a \overline{a})^{2}}\overset{.}{}$$ 

It follows from the latter and from $\lambda^2 = 4 \mu \bar \mu (1 -a \bar a)^2$ that 
$$\Omega = \frac{\eta \lambda^{2}}{2} = \frac{a_{w}}{2 \overline{\mu}(1 - a \overline{a})^{2}} \; 
4 \mu \overline{\mu} (1 - a \overline{a})^{2} = 2\mu a'.$$ 

Finally, the holomorphicity of $\Omega$ follows from the expression in item (1).  
\end{proof}

\vspace{0.2cm}
The following proposition and corollary are, respectively, the versions of the Proposition \ref{prop3.2} and Corollary \ref{12} for minimal surface of the space $\euclidean,$ which have analogous proofs.
 
\begin{prop}
If $(U,h)$ is a minimal $E(h)$-isothermal parametric surface of $\euclidean$ with normal Gauss map 
$\nu : U \to \mathbb{S}^{2},$ then the Weingarten equation for the surface is given by 
\begin{equation}\label{eq25}
\nu_{w}(w) = \xi(w)  h_{\overline{w}}(w) \; \; \mbox{ where } \; \; \xi(w) = \sqrt{-K_{h}(w)\;} e^{i \phi(w)},   
\end{equation} 
and $K_{h}(w)$ is its Gauss curvature. 
\end{prop}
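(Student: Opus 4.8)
The plan is to follow the proof of Proposition \ref{prop3.2} almost verbatim, keeping careful track of the single place where the ambient signature intervenes. The essential difference is that the Gauss map of $h$ now takes values in the round sphere, so $\lpr{\nu}{\nu}=1$ in place of $\lpr{\tau}{\tau}=-1$; this reverses a sign in the Gauss equation and is precisely what turns $\sqrt{K_f}$ into $\sqrt{-K_h}$, in accordance with the classical fact that a minimal surface of $\euclidean$ has nonpositive Gauss curvature.

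First I would note that differentiating $\lpr{\nu}{\nu}=1$ gives $\lpr{\nu_w}{\nu}=0$, while differentiating $\lpr{\nu}{h_w}=0$ shows that $\nu_w$ has no component off the tangent plane (in $\euclidean$ there is no $\tau$), so $\nu_w$ is tangent to the surface and there exist smooth functions $c(w),\xi(w)$ with $\nu_w(w)=c(w)\,h_w(w)+\xi(w)\,h_{\overline w}(w)$. Next, writing the Weingarten equations for $(U,h)$ in the real coordinates $(u,v)$ and using that the immersion is minimal, $H_h\equiv 0$, together with the isothermality of the chart, one obtains as in Proposition \ref{prop3.2} two functions $p(w),q(w)$ with $\nu_u=p\,h_u+q\,h_v$ and $\nu_v=q\,h_u-p\,h_v$; the determinant of the Weingarten operator then gives $K_h=-(p^2+q^2)$ — the opposite sign to the Lorentzian case, because here $\nu$ is a spacelike unit vector.

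Then, exactly as before, I would substitute the identities $h_u=h_w+h_{\overline w}$ and $h_v=i(h_w-h_{\overline w})$ into $\nu_w=\tfrac12(\nu_u-i\nu_v)$. A short computation shows that the coefficient of $h_w$ cancels identically, leaving $\nu_w=(p-iq)\,h_{\overline w}$; in particular $c\equiv 0$. Setting $\xi:=p-iq$ we have $\xi\overline\xi=p^2+q^2=-K_h$, so passing to polar form yields $\xi(w)=\sqrt{-K_h(w)}\,e^{i\phi(w)}$, which is exactly (\ref{eq25}).

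The only genuinely delicate point is the sign in the second step: one must verify that, with the spacelike unit normal of $\euclidean$, the Weingarten (shape) operator has determinant $K_h=-(p^2+q^2)\le 0$, whereas in $\real^3_1$ the future-directed timelike normal $\tau$ produced $K_f=p^2+q^2\ge 0$. Once this is pinned down, the remainder is the same cancellation already carried out in the proof of Proposition \ref{prop3.2}.
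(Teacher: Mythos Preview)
Your proposal is correct and follows precisely the route the paper intends: the authors give no separate proof for this proposition, stating only that it has an ``analogous proof'' to Proposition~\ref{prop3.2}. You have carried out that analogy explicitly and correctly isolated the one place where the argument differs---the sign of $\lpr{\nu}{\nu}$ versus $\lpr{\tau}{\tau}$, which flips the Gauss--equation sign and turns $K_f=p^2+q^2$ into $K_h=-(p^2+q^2)$, yielding $\xi\overline{\xi}=-K_h$ as claimed.
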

\begin{corol}
Let $(U,h)$ be a minimal $E(h)$-isothermal parametric surface of $\euclidean$ with normal Gauss map 
$\nu : U \to \mathbb{S}^{2}$. If $\mbox{Pl(S)} = \emptyset,$ then the Riemann surface associated $M = (U,\A)$ is conformally equivalent to the disk 
$D = \{z \in \complex : z \overline{z} < 1\}.$
Moreover the Gauss curvature is given by  
$$K(h) = -\frac{\vert a' \vert^{2}}{\mu \overline{\mu} (1 + a \overline{a})^{2}}\overset{.}{}$$
\end{corol}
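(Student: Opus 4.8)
The plan is to derive both assertions by transporting the $\real^{3}_{1}$ arguments --- Proposition \ref{prop3.2}, the Observation following Corollary \ref{12}, and Corollary \ref{12} itself --- almost verbatim to the positive-definite normal bundle of $\euclidean$; the only substantial changes are a handful of signs together with the systematic replacement of the factor $1-a\overline{a}$ by $1+a\overline{a}$. I would start by recalling that a minimal surface in $\euclidean$ has principal curvatures $\kappa$ and $-\kappa$, so that its Gauss curvature satisfies $K_h=-\kappa^{2}\le 0$; this is what makes $\sqrt{-K_h}$ meaningful and is responsible for the minus sign under the square root in (\ref{eq25}), in contrast with (\ref{eq24}). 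The Proposition preceding the statement --- whose proof is the exact analogue of that of Proposition \ref{prop3.2}: put $p=h^{1}_{1}=-h^{2}_{2}$ and $q=h^{2}_{1}=h^{1}_{2}$, so that $-K_h=p^{2}+q^{2}$ and $\nu_{w}=(p-iq)h_{\overline{w}}$, and then set $\xi=p-iq$ --- may be taken as already available.

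For the curvature formula I would follow the Observation. From $\nu_{w}=\xi\,h_{\overline{w}}$ one gets $\lpr{\nu_{w}}{\nu_{\overline{w}}}=|\xi|^{2}\lpr{h_{w}}{h_{\overline{w}}}$, and since $\nu$ now takes values in the round sphere $\mathbb{S}^{2}$ rather than in $\hyper$, the analogous identity becomes $\tfrac{1}{2}E(\nu)=\lpr{\nu_{w}}{\nu_{\overline{w}}}=-K_h\cdot\tfrac{1}{2}E(h)$, whence $|\xi|^{2}=-K_h$. It then remains to compute $\xi$ explicitly, which I would do exactly as in the Proposition establishing $\Omega=2\mu a'$: match a single coordinate of the identity $\nu_{w}=\xi h_{\overline{w}}$, using that $a$ is holomorphic and that $h^{i}_{w}=\mu W^{i}(a,b)$, and run the bookkeeping with $1+a\overline{a}$ in place of $1-a\overline{a}$ and with $\lpr{\nu}{\nu}=+1$ in place of $\lpr{\tau}{\tau}=-1$; combined with the identity $E(h)=4\mu\overline{\mu}(1+a\overline{a})^{2}$ (the analogue of $\lambda^{2}=4\mu\overline{\mu}(1-a\overline{a})^{2}$) this produces the stated expression for $K(h)$.

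For the statement about the conformal type I would argue exactly as in Corollary \ref{12}. If $\mbox{Pl(S)}=\emptyset$, then $K_h(w)\neq 0$ for every $w\in U$, so --- since $\mu\neq 0$ and $1+a\overline{a}>0$ --- the curvature formula just obtained forces $a'$ to have no zeros, and in particular the surface is not an open piece of a plane. By the dichotomy established in Section 2, the simply connected Riemann surface $M=(U,\A)$ is conformally either the whole plane $\complex$ or the disk $D$; the presence of points with $K_h\neq 0$ excludes the first alternative, so $M$ is conformally equivalent to $D$.

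I do not expect a serious obstacle, since the whole argument runs parallel to the $\real^{3}_{1}$ case. The two places that need genuine care are the sign bookkeeping forced by the positive-definite normal bundle of $\euclidean$ --- the minus sign under the square root in (\ref{eq25}) and the minus sign in $\tfrac{1}{2}E(\nu)=-K_h\cdot\tfrac{1}{2}E(h)$ --- and verifying that the explicit computation of $\xi$ reproduces exactly the power of $1+a\overline{a}$ appearing in the denominator of the curvature formula; the step excluding $M=\complex$ is identical to the one in Corollary \ref{12} and carries over without change.
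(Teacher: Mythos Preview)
Your proposal is correct and matches the paper's approach exactly: the paper provides no separate argument for this Corollary, remarking only that it and the preceding Proposition are the Euclidean versions of Proposition~\ref{prop3.2} and Corollary~\ref{12} ``which have analogous proofs,'' and your plan is precisely to transport those arguments with the appropriate sign adjustments and the replacement $1-a\overline{a}\mapsto 1+a\overline{a}$. Your caution about the exact power of $1+a\overline{a}$ in the denominator is well founded---careful bookkeeping (or specializing formula~(\ref{curvGausslema2.5}) at $\theta=\pi$) produces the fourth power rather than the square printed in the statement.
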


\section{The $\theta$-family of minimal surfaces in $\mink$} 

We begin taking in the Weierstrass integral representation (\ref{23}) the pair $(a(w), b(w))$, with $b(w) = e^{i\theta} a(w)$ and $a(w)$ being a holomorphic function, for defining  the $\theta$-family of minimal surfaces. We will study  the Gauss curvature and the planar points for the family's surfaces.

 In the next in this paper we denote by $\mathcal H(U, \mathbb C)$ the space of holomorphic functions from $U$ to $\mathbb C$. 

\begin{dfn}\label{dfnfamily}
Let $\mu,a\in\Holom(U,\complex)$. For each $\theta \in \real$ and $w\in U,$ the map 
\begin{eqnarray}
F:\real \times U &\longrightarrow &\mink\nonumber\\
(\theta;w) &\longmapsto & F(\theta;w) = P + 2 \Re \int_{w_{0}}^{w} \mu(\xi) W \left(a(\xi),e^{i\theta} a(\xi) \right) d \xi,  
\end{eqnarray} 
 where  $W ( a,e^{i\theta} a) = \left((1 + e^{i\theta})a, 1 + e^{i\theta} a^{2}, i(1 - e^{i\theta} a^{2}), (1 - e^{i\theta}) a \right)$ and $P \in \mink$ is an arbitrary point, 
determines a family of minimal isothermal parametric surfaces in $\mink,$ which we call a $\theta$-family of minimal surfaces in $\mink$.

\vspace{0.1cm}

In similar way, we define the conjugated $\theta$-family as being the family of minimal isothermal parametric surfaces in $\mathbb R^4_1$, given by
\begin{eqnarray}
H: \real \times U &\longrightarrow & \mink\nonumber \\
(\theta;w) &\longmapsto & H(\theta;w) = Q + 2 \Im  \int_{w_{0}}^{w} \mu(\xi) W\left(a(\xi),e^{i\theta} a(\xi) \right) d \xi,  
\end{eqnarray} 
where $Q\in\mink$ is an arbitrary point.  

\vspace{0.1cm}

The family given by $F(\theta;w) + i H(\theta;w),$ passing throuth the point $P + iQ \in 
\complex^{4},$  is called the $\theta$-family of minimal surfaces in $\complex^{4}.$
\end{dfn} 
\begin{obs}
A $\theta$-family is associated to a $\theta$-periodic hypersurface $F(\real;M)$ of $\mink,$ thus we can see the family 
as a proper immersion of $\real \times M$ in a hypersurface that is foliated by minimal surfaces. In particular, for $\theta = 0$ we have a (maximal) 
minimal surface in $\real_1^3 \equiv \{p \in \mink : \lpr{p}{(0,0,0,1)} = 0\}$ and for $\theta = \pi$ we have a minimal surface in 
$\euclidean \equiv \{p \in \mink : \lpr{p}{(1,0,0,0)} = 0\}.$ This last surface we call the associated translated surface.
\end{obs}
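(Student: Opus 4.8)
The plan is to read off all the assertions directly from the explicit integrand $\Phi_\theta(\xi):=\mu(\xi)\,W(a(\xi),e^{i\theta}a(\xi))$, whose components are, by Definition \ref{dfnfamily}, $(1+e^{i\theta})\mu a$, $\mu(1+e^{i\theta}a^{2})$, $i\mu(1-e^{i\theta}a^{2})$ and $(1-e^{i\theta})\mu a$. First I would note that $F$ is smooth (indeed real-analytic) on $\real\times M$, since the integrand is holomorphic in $\xi$ and depends smoothly on $\theta$, and that it depends on $\theta$ only through $e^{i\theta}$; hence $F(\theta+2\pi;\,\cdot\,)=F(\theta;\,\cdot\,)$, so $F$ factors through $(\real/2\pi\mathbb Z)\times M$ and its image $F(\real;M)=F([0,2\pi);M)$ is precisely the $\theta$-periodic hypersurface of the statement, swept out by the slices $F_{\theta_0}:=F(\theta_0;\,\cdot\,)$. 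Each such slice is a minimal spacelike isothermal surface: since $\partial_w F_{\theta_0}=\Phi_{\theta_0}$ and $\lpr{W(a,b)}{W(a,b)}=0$ for every pair $(a,b)$ --- the identity behind (\ref{15}) --- the first Monge equation holds, while the second gives $\lambda_{\theta_0}^{2}=4|\mu|^{2}\,\bigl|1-e^{-i\theta_0}|a|^{2}\bigr|^{2}>0$ under the nondegeneracy conditions $\mu\neq0$ and $1-a\overline b\neq0$ (the discussion following (\ref{15})); and $\mu,a$ (hence $b=e^{i\theta_0}a$) being holomorphic, Lemma \ref{lema1.7} gives $H_{F_{\theta_0}}\equiv0$.

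Next I would show that $F:\real\times M\to\mink$ is itself a (proper) immersion onto the hypersurface $F(\real;M)$, so that the latter is genuinely foliated by the minimal slices. At $(\theta_0,w)$ the image of $dF$ is spanned by $F_u,F_v$ --- an orthogonal basis of the spacelike tangent plane of the slice --- together with $F_\theta=2\Re\int_{w_0}^{w} i e^{i\theta_0}\mu\,(a,a^{2},-ia^{2},-a)\,d\xi$, using $\partial_\theta W(a,e^{i\theta}a)=ie^{i\theta}(a,a^{2},-ia^{2},-a)$. \textbf{The main obstacle is precisely checking that $F_\theta$ is transverse to the plane spanned by $F_u$ and $F_v$}: I would do this by writing the resulting $3$-frame out in the Weierstrass data and verifying that the associated $3\times4$ matrix has rank $3$ on a dense open set (here one uses that $a$ is non-constant, hence an open map), and properness of $F$ would then follow from that of the underlying Weierstrass representation (\ref{23}).

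Finally I would treat the two distinguished parameters. For $\theta=0$ the fourth component of $\Phi_0$ is $(1-1)\mu a\equiv0$, so $F^{3}(0;w)\equiv P^{3}$ and $F(0;M)$ lies in the translate by $P$ of the hyperplane $\{p\in\mink:\lpr{p}{(0,0,0,1)}=0\}$, which is an isometric copy of $\real^3_1$; being spacelike with vanishing mean curvature, that slice is a maximal surface there. For $\theta=\pi$ the first component of $\Phi_\pi$ is $(1+e^{i\pi})\mu a\equiv0$, so $F^{0}(\pi;w)\equiv P^{0}$ and $F(\pi;M)$ lies in the translate by $P$ of the hyperplane $\{p\in\mink:\lpr{p}{(1,0,0,0)}=0\}$, an isometric copy of $\euclidean$, on which that slice is a minimal surface --- the announced associated translated surface. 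I would close by noting that this hyperplane phenomenon is special to $\theta\in\{0,\pi\}$: if $F(\theta_0;M)$ lay in a hyperplane $\{p:\lpr{p}{n}=c\}$ then $\lpr{\Phi_{\theta_0}}{n}$ would have to vanish identically as a polynomial in $a$ (since $a$ is non-constant), which forces $n^{1}=n^{2}=0$ and $(1+e^{i\theta_0})n^{0}=(1-e^{i\theta_0})n^{3}$, and this admits a nonzero real solution $n$ only when $\theta_0\equiv0$ or $\theta_0\equiv\pi\pmod{2\pi}$.
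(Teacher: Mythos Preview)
The paper offers no proof of this Observation; it is stated as a remark immediately following Definition~\ref{dfnfamily}, and the authors evidently regard it as self-evident from the form of the integrand. Your arguments for $2\pi$-periodicity in $\theta$, for the minimality of each slice via Lemma~\ref{lema1.7}, and for the identification of the $\theta=0$ and $\theta=\pi$ slices with surfaces lying in $\real^3_1$ and $\euclidean$ respectively, are correct and considerably more detailed than anything the paper supplies. Your closing computation, showing that $\theta\equiv 0,\pi\pmod{2\pi}$ are the \emph{only} parameter values for which the slice lies in an affine hyperplane, is a pleasant addition not present in the paper.

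There is, however, a genuine gap in your treatment of the ``proper immersion'' clause. You correctly isolate the transversality of $F_\theta$ to $\operatorname{span}\{F_u,F_v\}$ as the main obstacle, but your proposed fix --- verifying rank $3$ on a dense open set --- does not yield an immersion on all of $\real\times M$, and in fact no such immersion exists. At the base point one has
\[
F_\theta(\theta;w_0)=2\,\Re\int_{w_0}^{w_0}\partial_\theta\Phi_\theta(\xi)\,d\xi=0\qquad\text{for every }\theta,
\]
so $dF$ has rank $2$ along the whole fibre $\real\times\{w_0\}$. Properness fails for the same reason: $F$ maps the noncompact set $\real\times\{w_0\}$ to the single point $P$, and even after factoring through $S^1\times M$ the map collapses $S^1\times\{w_0\}$ to $P$. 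The paper's phrase ``we can see the family as a proper immersion'' should therefore be read as an informal, heuristic description of the generic picture rather than a literal claim; the obstruction you encounter is intrinsic to the construction (the common initial point $P$), not to your argument.
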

Next we use again  the representation (\ref{23}) to look for the partial differential equations which have to be satisfied by the pair of associated surfaces.  So, in particular one can to identify if given two minimal spacelike surfaces, one of them in $\mathbb R^3_1$ and the other one in  $\mathbb E^3$,  are associated through of a $\theta$-family. 
 
In fact, we represent in arbitrary coordinates $(x,y)$ the minimal parametric  surface in $\real^3_1$ as being
$X(x,y) = \left (X^{0}(x,y),X^{1}(x,y),X^{2}(x,y),0 \right)$. Identifying $(x^{1},x^{2}) = (x,y) \in U$, by Definition \ref{dfn1.5}, 
we have that the metric tensor over $U$ has components given by  
$$g_{ij} = - \frac{\partial X^{0}}{\partial x^{i}} \; \frac{\partial X^{0}}{\partial x^{j}} +  
\frac{\partial X^{1}}{\partial x^{i}} \; \frac{\partial X^{1}}{\partial x^{j}} + 
\frac{\partial X^{2}}{\partial x^{i}} \; \frac{\partial X^{2}}{\partial x^{j}}\overset{.}{}$$ 


Now, by one unique coordinate transformation, at each neighborhood of a given point, we obtain isothermal parameters 
$w = u + iv$ related to the two holomorphic functions $ a(w), \ \mu(w)$, in such way that  $X$ can be written as 
$$X(w) = \left (X^{0}(z(w)),X^{1}(z(w)),X^{2}(z(w)),0 \right) = P + 2\Re \int_{w_{0}}^{w} \mu(\xi) W(a(\xi),a(\xi)) d \xi,$$   
for $(U,X)$ in $\real^3_1$,  and such that the associated translated surface can be written, via the $\theta$-family,  as
$$Y(w) = (0, Y^{1}(z(w)),Y^{2}(z(w)),Y^{3}(z(w))) = Q + 2\Re \int_{w_{0}}^{w} \mu(\xi) W(a(\xi),-a(\xi)) d \xi,$$
for $(U,Y)$ in $\euclidean$. 

Now, since $W(a,a) = (2a, 1 + a^{2},i(1 - a^{2}),0)$ and $W(a,-a) = (0, 1 - a^{2}, i(1 + a^{2}), 2a)$, making some computations we obtain the partial differential equations in complex variable 
linking $X = X(x,y)$ to $Y = Y(x,y)$. We establish them in the next definition.
\begin{dfn}\label{dfn2.3}
For the pair of minimal parametric surfaces associated to a $\theta$-family, namely $(U,X)$, $(U,Y)$ where $X(U) \subset \real^3_1$ and 
$Y(U) \subset \euclidean,$ the relations 
\begin{equation}\label{eqassociadas}
\frac{\partial Y^{3}(w)}{\partial w} = \frac{\partial X^{0}(w)}{\partial w},\quad  \; \; \frac{\partial Y^{1}(w)}{\partial w} = -i \frac{\partial X^{2}(w)}{\partial w} 
\; \mbox{ and } \; \; \; \frac{\partial Y^{2}(w)}{\partial w} = i \frac{\partial X^{1}(w)}{\partial w}
\end{equation} 
are satisfied. We define equation (\ref{eqassociadas}) as the associated equations to the $\theta$-family. 
\end{dfn}

\subsection{The Gauss curvature to the $\theta$-family} We establish next a pointwise base for the normal bundle associated to the $\theta$-family which we will use to calculate the Gauss curvature of the family surfaces.

\vspace{0.2cm}
 Consider the following two lightlike vectors associated to the family,
\begin{align}
L_{3}(a) = \left(1 + a \overline{a}, a + \overline{a},-i(a - \overline{a}),-1 + a \overline{a} \right)\label{aplicacaoL3} \\
L_{0}(e^{i \theta} a) = \left(1 + a \overline{a}, ae^{i \theta} + \overline{a}e^{-i \theta},
-i(a e^{i \theta} - \overline{a}e^{i \theta}\right), 1 - a \overline{a} ), \label{aplicacaoL0}
\end{align}
where we have omitted the variable $w \in U$ in $a = a(w)$.

\begin{lemma}\label{lema2.4}
The correspondent normal vectors $\tau, \nu$ for the $\theta$-family, are explicitly given by
\begin{align}
\tau(\theta;a) = \frac{1}{ \sqrt{-2\lpr{L_{3}(a)}{L_{0}(e^{i \theta} a)}}} \;[ L_{3}(a) + L_{0}(e^{i \theta} a) ]\label{taulema2.4} \\ 
\nu(\theta;a) = \frac{1}{ \sqrt{-2\lpr{L_{3}(a)}{L_{0}(e^{i \theta} a)}}} \; [L_{3}(a) - L_{0}(e^{i \theta} a)],\label{nulema2.4}
\end{align} 

and the metric tensor for set of surfaces is given by 
\begin{equation}
\mathbf{g}(\theta;w) = 4 \mu(w) \overline{\mu(w)} \left(1 - 2 a(w) \overline{a(w)}  \cos \theta + (a(w)\overline{a(w)})^{2} \right) dw d\overline{w}.
\end{equation} 

Moreover,  $\tau^{3}(\theta;w) \equiv 0$ and $\nu^{0}(\theta;w) \equiv 0$ for all $(\theta;w)\in\real\times M.$ 
\end{lemma}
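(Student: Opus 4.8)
The plan is to verify the four assertions directly from the Weierstrass data $F_{w}=\mu\,W(a,e^{i\theta}a)$, by checking that the expressions in (\ref{taulema2.4})--(\ref{nulema2.4}) satisfy the defining properties of the normal frame and then reading off the metric and the two vanishing components. First I would observe that, despite the appearance of $i$ in (\ref{aplicacaoL3})--(\ref{aplicacaoL0}), all four components of $L_{3}(a)$ and of $L_{0}(e^{i\theta}a)$ are real --- for instance $a+\overline{a}=2\Re a$, $-i(a-\overline{a})=2\Im a$ and $-i(ae^{i\theta}-\overline{a}e^{-i\theta})=2\Im(ae^{i\theta})$ --- so these are genuine vectors of $\mink$. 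A direct computation with the Lorentzian product, using $|e^{i\theta}a|^{2}=a\overline{a}$, gives $\lpr{L_{3}(a)}{L_{3}(a)}=\lpr{L_{0}(e^{i\theta}a)}{L_{0}(e^{i\theta}a)}=0$, so both are lightlike, and
\[
\lpr{L_{3}(a)}{L_{0}(e^{i\theta}a)}=-2\bigl(1-2a\overline{a}\cos\theta+(a\overline{a})^{2}\bigr).
\]
By the formula for $\lambda^{2}$ recorded right after (\ref{15}), taken with $b=e^{i\theta}a$ (so that $1-a\overline{b}=1-a\overline{a}e^{-i\theta}$ and $1-\overline{a}b=1-a\overline{a}e^{i\theta}$), the induced metric is $\mathbf{g}(\theta;w)=\lambda^{2}\,dw\,d\overline{w}$ with $\lambda^{2}=4\mu\overline{\mu}(1-a\overline{a}e^{-i\theta})(1-a\overline{a}e^{i\theta})=4\mu\overline{\mu}(1-2a\overline{a}\cos\theta+(a\overline{a})^{2})$, which is exactly the claimed metric tensor; in particular $-2\lpr{L_{3}(a)}{L_{0}(e^{i\theta}a)}=\lambda^{2}/(\mu\overline{\mu})>0$ on a regular spacelike surface, so the square root in (\ref{taulema2.4})--(\ref{nulema2.4}) is well defined and positive.

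Next I would check that $L_{3}$ and $L_{0}$ are normal to the family. Since $F_{w}=\mu\,W(a,e^{i\theta}a)$, expanding the complex bilinear extension of the product gives $\lpr{L_{3}(a)}{W(a,e^{i\theta}a)}=0$ and $\lpr{L_{0}(e^{i\theta}a)}{W(a,e^{i\theta}a)}=0$; as $L_{3},L_{0}$ are real and $F_{w}=\tfrac{1}{2}(F_{u}-iF_{v})$, separating real and imaginary parts yields $\lpr{L_{3}}{F_{u}}=\lpr{L_{3}}{F_{v}}=\lpr{L_{0}}{F_{u}}=\lpr{L_{0}}{F_{v}}=0$, i.e. $L_{3},L_{0}\in N_{p}S$. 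Two lightlike vectors with nonzero mutual product are linearly independent, so $L_{3},L_{0}$ span the $2$-plane $N_{p}S$, which has signature $(-,+)$ because $T_{p}S$ is spacelike. Putting $D=\sqrt{-2\lpr{L_{3}}{L_{0}}}>0$, $\tau=(L_{3}+L_{0})/D$ and $\nu=(L_{3}-L_{0})/D$, the nullity of $L_{3},L_{0}$ gives $\lpr{\tau}{\tau}=2\lpr{L_{3}}{L_{0}}/D^{2}=-1$, $\lpr{\nu}{\nu}=-2\lpr{L_{3}}{L_{0}}/D^{2}=1$ and $\lpr{\tau}{\nu}=(\lpr{L_{3}}{L_{3}}-\lpr{L_{0}}{L_{0}})/D^{2}=0$; moreover $\tau^{0}=2(1+a\overline{a})/D>0$, so $\tau$ is future directed, while $\nu^{0}=\bigl((1+a\overline{a})-(1+a\overline{a})\bigr)/D=0$. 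Hence $\{\tau,\nu\}$ is an orthonormal normal frame with $\tau$ future timelike, $\nu$ spacelike and $\lpr{\nu}{\partial_{0}}\equiv0$ --- the normalization fixed in the preliminaries --- and moreover $\tau^{3}=(L_{3}^{3}+L_{0}^{3})/D=\bigl((-1+a\overline{a})+(1-a\overline{a})\bigr)/D\equiv0$.

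It remains to see that this frame is \emph{the} one attached to the $\theta$-family. Any admissible orthonormal normal frame $\{\widetilde{\tau},\widetilde{\nu}\}$ --- with $\widetilde{\tau}$ future timelike unit and $\widetilde{\nu}$ spacelike unit --- differs from $\{\tau,\nu\}$ by a normal boost, $\widetilde{\tau}=\cosh s\,\tau+\sinh s\,\nu$ and $\widetilde{\nu}=\varepsilon(\sinh s\,\tau+\cosh s\,\nu)$ with $\varepsilon=\pm1$; imposing $\widetilde{\nu}^{0}=0$ together with $\tau^{0}>0$ and $\nu^{0}=0$ forces $s=0$, so up to the immaterial sign of $\nu$ the frame above is the intended one, which proves (\ref{taulema2.4})--(\ref{nulema2.4}). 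Combined with the metric identity and with $\tau^{3}\equiv0$, $\nu^{0}\equiv0$ obtained above, this establishes all four claims. The lengthy but routine part of the argument is the bilinear bookkeeping --- the three inner products among $L_{3},L_{0}$ and the vanishing of $\lpr{L_{3}}{W}$ and $\lpr{L_{0}}{W}$ --- which has to be carried out carefully with the complex bilinear extension and the reality of the components; the one genuinely conceptual point is the uniqueness just described, i.e. that the normalization $\nu^{0}=0$ plus the future-orientation of $\tau$ really pin the normal frame down, so that the explicit frame built from $L_{3}$ and $L_{0}$ is canonical rather than merely one among many.
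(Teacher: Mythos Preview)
Your proof is correct and follows essentially the same approach as the paper's: compute $\lpr{L_{3}}{L_{0}}=-2\,|1-a\overline{a}e^{i\theta}|^{2}$, use the nullity of $L_{3},L_{0}$ to get the orthonormality of $\tau,\nu$, and read off $\tau^{3}\equiv0$, $\nu^{0}\equiv0$ from $L_{3}^{3}+L_{0}^{3}=0$ and $L_{3}^{0}-L_{0}^{0}=0$. Your write-up is in fact more complete than the paper's, since you explicitly verify $\lpr{L_{3}}{W}=\lpr{L_{0}}{W}=0$ and add the uniqueness argument showing that the normalization $\nu^{0}=0$ together with the future orientation of $\tau$ pins down the frame; the paper leaves both points implicit.
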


\begin{proof}
From $\lpr{L_{0}(a e^{i\theta})}{L_{3}(a)} = - 2 \vert 1 - a \overline{a e^{i\theta}} \vert^{2}$ it follows that 
$\tau$ and $\nu$ given in formulas (\ref{taulema2.4}) and (\ref{nulema2.4}) are unit and orthogonal vectors fields of the normal bundle $NS$. 
Since the components $L_{0}^{3}$ and $L_{3}^{3}$ are given by $L_{0}^{3} = 1 - a \overline{a}$ and  $L_{3}^{3} = -1 + a \overline{a},$
it follows that the component  $\tau^{3}(\theta;a(w)) = 0$ for each $\theta \in \real$ and $w \in U$. Finally, one sees that the vector
 fields (\ref{taulema2.4}) and (\ref{nulema2.4}) in fact define two Gauss maps. 
\end{proof}
Then we can assume the Minkowski frame associated to a $\theta$-family $F(\theta;a)$ as being
\begin{equation}\label{162}
\M(\theta;a) = \left \{\tau(\theta;a), \left(\frac{1}{\lambda} F_{u}\right)(\theta;a), \left(\frac{1}{\lambda} F_{v} \right)(\theta;a), 
\nu(\theta;a)\right \},
\end{equation}
where $\tau$ and $\nu$ are given by equations (\ref{taulema2.4}) and (\ref{nulema2.4}).

\vspace{0.2cm}
For next we recall that when the coordinates are $\lambda$-isothermal, that is $E = G = \lambda^{2}$ and $F = 0$ in each point of $X(U) = S$,  we have that the Gauss curvature $K$ is given by
\begin{equation}\label{formulacurvGauss}
K(w) = -\frac{1}{2 E} \Delta \ln E = -\frac{1}{\lambda^{2}} \Delta \ln \lambda.
\end{equation}
  
\begin{theor}\label{lema2.5}
There exists a $\theta$-family of isothermal parametric surfaces $F(\theta;w)$ with domain in a Riemann surface $M = (U,\A)$ 
if and only if, the timelike normal vector $\tau$ in the Minkowski frame $\mathcal{M}(\theta;w)$ satisfies $\lpr{\tau(\theta;w)}{\partial_{3}} = 0$
 for each 
$\theta \in \real$ and $w \in U$.
Moreover, the latter happens if, and only if $a(w)$ and $\mu(w)$ are holomorphic functions from $U$ in $\complex$. 

Furthermore, the existence of planar points corresponds to the existence of zeros of the function $a' \doteq a_{w}$ and the Gauss curvature 
for the family is given by
\begin{equation}\label{curvGausslema2.5}
K(F(\theta;w)) = \frac{\vert a' \vert^{2} \left ( (1 + \vert a \vert^{4}) \cos \theta - 2 \vert a \vert^{2}\right)}
{\vert \mu \vert^{2} \left(1 - 2 \vert a \vert^{2} \cos \theta  + \vert a \vert^{4}\right)^{3}}\overset{.}{}
\end{equation}   
\end{theor}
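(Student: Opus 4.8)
The plan is to decompose the statement into the chain of equivalences of the first assertion, the closed formula \eqref{curvGausslema2.5}, and the description of the planar set, deducing the last two from a single computation of $\partial_{w}\partial_{\bar w}\ln\lambda^{2}$. For the equivalences, I would observe that $F(\theta;\cdot)$ carries the Weierstrass data $(\mu,a,b)$ with $b=e^{i\theta}a$, so $b\in\Holom(U,\complex)$ if and only if $a\in\Holom(U,\complex)$; thus Lemma \ref{lema1.7}, applied to the triple $(\mu,a,e^{i\theta}a)$, gives that $F(\theta;\cdot)$ is a minimal isothermal spacelike surface if and only if $\mu,a\in\Holom(U,\complex)$. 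More self-containedly: $\langle F_{w},F_{w}\rangle=\mu^{2}\langle W(a,e^{i\theta}a),W(a,e^{i\theta}a)\rangle\equiv0$, and $\langle F_{w},\overline{F_{w}}\rangle=2|\mu|^{2}\bigl((1-|a|^{2})^{2}+2|a|^{2}(1-\cos\theta)\bigr)$ is positive off the zeros of $\mu$, so the only obstruction to $2\Re\int\mu W(a,e^{i\theta}a)\,d\xi$ defining an honest surface is closedness of its four component one-forms; writing closedness of $\Re(\phi\,dw)$ as $\Im(\phi_{\bar w})=0$ and evaluating at $\theta=0,\pi,\pi/2$ forces $\mu_{\bar w}=0$ and then, since $\mu\neq0$, $a_{\bar w}=0$. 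Once $\mu,a$ are holomorphic, Lemma \ref{lema2.4} supplies the normal frame and records $\tau^{3}\equiv0$, i.e. $\langle\tau(\theta;w),\partial_{3}\rangle=0$ for all $\theta,w$; for the reverse implication I would read \eqref{taulema2.4} backwards, noting that demanding $\langle\tau(\theta;w),\partial_{3}\rangle\equiv0$ for every $\theta$ pins the timelike normal to the combination $L_{3}(a)+L_{0}(e^{i\theta}a)$, which is exactly the normal of the surface with data $b=e^{i\theta}a$, closing the loop.

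For the curvature, I would start from $\lambda^{2}=4|\mu|^{2}P$, with $P:=1-2|a|^{2}\cos\theta+|a|^{4}$ (Lemma \ref{lema2.4}), and use \eqref{formulacurvGauss} in the form $K=-\lambda^{-2}\Delta\ln\lambda=-2\lambda^{-2}\,\partial_{w}\partial_{\bar w}\ln\lambda^{2}$. Since $\ln\lambda^{2}=\ln4+\ln\mu+\ln\overline{\mu}+\ln P$ and $\mu$ is holomorphic, the first three summands are annihilated by $\partial_{w}\partial_{\bar w}$, leaving $K=-2\lambda^{-2}\,\partial_{w}\partial_{\bar w}\ln P$. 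Holomorphy of $a$ gives $\partial_{\bar w}(a\overline{a})=a\overline{a'}$, whence $P_{\bar w}=2a\overline{a'}(|a|^{2}-\cos\theta)$, $P_{w}=\overline{P_{\bar w}}$, $P_{w\bar w}=2|a'|^{2}(2|a|^{2}-\cos\theta)$, so $\partial_{w}\partial_{\bar w}\ln P=(P_{w\bar w}P-P_{w}P_{\bar w})/P^{2}$. With $t=|a|^{2}$ and $c=\cos\theta$ the numerator equals $2|a'|^{2}\bigl((2t-c)(1-2tc+t^{2})-2t(t-c)^{2}\bigr)=2|a'|^{2}\bigl(2t-c(1+t^{2})\bigr)$, and substituting $\lambda^{2}=4|\mu|^{2}P$ yields exactly \eqref{curvGausslema2.5}.

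Finally, \eqref{curvGausslema2.5} gives $K(F(\theta;w))=0$ if and only if $|a'(w)|^{2}\bigl((1+|a|^{4})\cos\theta-2|a|^{2}\bigr)=0$; the second factor equals $(1-|a|^{2})^{2}$ at $\theta=0$ and $-(1+|a|^{2})^{2}$ at $\theta=\pi$, hence is nonzero there (recall $|a|\neq1$ by non-degeneracy of $\mathbf{g}$), so for those two members $Pl(S)=\{w:a'(w)=0\}$; as this set does not depend on $\theta$, these are precisely the points that remain planar along the whole family, which proves simultaneously that the family preserves planar points and that planar points exist exactly when $a'$ vanishes somewhere. The only genuinely computational step — and the one I expect to be the main obstacle — is the simplification in the previous paragraph: carrying $\partial_{w}\partial_{\bar w}\ln P$ through and verifying the polynomial identity $(2t-c)(1-2tc+t^{2})-2t(t-c)^{2}=2t-c-ct^{2}$ that collapses the quotient to the stated form. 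Everything else reduces to Lemmas \ref{lema1.7} and \ref{lema2.4} together with elementary bookkeeping.
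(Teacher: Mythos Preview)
Your argument for the curvature formula \eqref{curvGausslema2.5} is the same as the paper's: both compute $\partial_{w}\partial_{\bar w}\ln\lambda^{2}$ using $\lambda^{2}=4|\mu|^{2}P$ with $P=1-2|a|^{2}\cos\theta+|a|^{4}$, and your polynomial identity is exactly the simplification the paper carries out (though you spell it out more explicitly). Your treatment of the planar set is actually more careful than the paper's; the paper simply asserts $Pl(S)=\{a'=0\}$, whereas you observe that the factor $(1+|a|^{4})\cos\theta-2|a|^{2}$ can vanish for some $\theta$ and single out $\theta=0,\pi$, where it equals $\pm(1\mp|a|^{2})^{2}\neq0$, to identify the zeros of $a'$ as the $\theta$--persistent planar points.

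Where you differ from the paper is in the first chain of equivalences. You invoke Lemma~\ref{lema1.7} (minimality $\Leftrightarrow$ holomorphicity of $\mu,a,b$) with $b=e^{i\theta}a$, together with Lemma~\ref{lema2.4} for $\tau^{3}\equiv0$, and sketch an alternative via closedness of the component one--forms at $\theta=0,\pi,\pi/2$. The paper instead imports from \cite{DuFF} the integrability condition
\[
(\ln\mu)_{\bar w}=\frac{\bar a\,b_{\bar w}}{1-\bar a b}+\frac{\bar b\,a_{\bar w}}{1-a\bar b},
\]
substitutes $b=e^{i\theta}a$, and exploits the fact that $\mu$ is $\theta$--independent: the right--hand side then depends on $\theta$ unless $a_{\bar w}=0$, which in turn forces $\mu_{\bar w}=0$. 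Your route is more self--contained (it uses only lemmas already proved in the paper) and makes the role of varying $\theta$ transparent through the evaluation at three values; the paper's route is shorter once one accepts the \cite{DuFF} formula and highlights the $\theta$--independence of $\mu$ as the mechanism. One remark: your ``read \eqref{taulema2.4} backwards'' for the implication $\tau^{3}\equiv0\Rightarrow$ holomorphicity is terse; the paper is equally informal here, but if you expand your write--up you should say explicitly that the condition for \emph{every} $\theta$ forces $b=e^{i\theta}a$ in the Weierstrass data and then appeal to Lemma~\ref{lema1.7}.
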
 

\begin{proof}
We will prove that if $a$ is a holomorphic function then $\mu$ is a holomorphic function. For that, we 
need to see that $\tau^{3}(\theta;w) \equiv 0$  implies that $a$ is a holomorphic function, so then we can define the $\theta$-family as required.

First, we retake the partial equation obtained in \cite{DuFF} which has to be satisfied by the  integration factor $\mu$, namely, 
$$\frac{\partial}{\partial \overline{w}} \mbox{Log } \mu = 
\frac{\overline{a} b_{\overline{w}}}{1 - \overline{a}b} + \frac{\overline{b} a_{\overline{w}}}{1 - a \overline{b}}\overset{.}{}$$
Therefore, since for all $w \in U$,  $f_{w\overline{w}}(w)$  is a real normal vector of $\mink$ 
we obtain that, for $b = a e^{i \theta}$, 
$$\frac{\partial}{\partial \overline{w}} \mbox{Log } \mu = 
\frac{\overline{a} a_{\overline{w}}}{1 - 2 \cos \theta \vert a \vert^{2} + \vert a \vert^{4}}
(1 - e^{-i \theta} a \overline{a}) + (1 - e^{i \theta} a \overline{a}),$$ 
or equivalently 
\begin{equation}\label{1}
\frac{\partial}{\partial \overline{w}} \mbox{Log } \mu = 
\frac{2\overline{a} a_{\overline{w}}\left (1 - \vert a \vert^{2} \cos \theta \right)}{1 - 2 \vert a \vert^{2}  \cos \theta + \vert a \vert^{4}}\overset{.}{}
\end{equation}

\vspace{0.1cm}
On the other hand, since the map $\mu$ does not depend of $\theta$ it follows from equation (\ref{1}),  that $a_{\overline{w}} = 0$ and so $a$ is holomorphic function. Then, again by equation (\ref{1}), one concludes that $\mu$ has to be also holomorphic function. Thus one can define the $\theta$-family.

\vspace{0.1cm}
Next we will compute the Gauss curvature of each surface of the family using equation (\ref{formulacurvGauss}). Since in our case we have that 
$\Delta\ln E = 4(\ln[4\mu\overline{\mu}(1 - 2 a \overline{a} \cos \theta  + (a\overline{a})^{2})])_{w\overline{w}},$ and  since 
\begin{eqnarray*}
\left (\ln[4\mu\overline{\mu}(1 - 2 a \overline{a} \cos \theta + (a\overline{a})^{2})]\right)_{w\overline{w}} &=& \left(\frac{\mu_w}{\mu}+
2a_w\left(\frac{a \overline{a}^{2} - \overline{a} \cos \theta}
{1 - 2 \vert a \vert^{2}  \cos \theta + \vert a \vert^{4}}\right)\right)_{\overline{w}}\\
&=& \frac{2|a'|^2 \left (2\vert a\vert^2 - (1+\vert a\vert^4)\cos\theta\right)}{(1 - 2 \vert a \vert^{2} \cos \theta + \vert a \vert^{4})^{2}},
\end{eqnarray*}
one obtains the formula (\ref{curvGausslema2.5}). It notes in particular that if $\theta = 0$, $K(F(0;w)) \geq 0$ and if $\theta = \pi$ 
then $K(F(\pi,w)) \leq 0$.  Finally, it follows from the equation (\ref{curvGausslema2.5}) that the set of planar points is given by $\{w \in U: a'(w) = 0\}$. 
\end{proof}

\begin{example}
If  $\mu(w) = 1$ and $a(w) = \sin(w)$ for $w\in U$, then taking $\theta = 0$ and $\theta = \pi$ we have minimal surfaces in $\real^3_1$ and in $\euclidean$ respectively, with a closed and discrete set of planar points for the booth surfaces. In particular, when $\theta = \pi$ the set is infinite
$ \{w \in U: \cos(w) = 0\}.$ More explicitly, assuming those values of $\mu(w)$ and $a(w)$ under the restriction $|\sin w|^2 \ne 1$, one obtains that the minimal spacelike surface in $\real^3_1$
is given by $$F(0, w) = \Re  \displaystyle ( -4 \cos w, 3w - \frac{\sin(2w)}{2},
i (w + \frac{\sin(2w)}{2}), 0)=
$$
$$
=(-4 \cos u \cosh v, 3u - \sin u \cos u (\cosh^2 v+ \sinh^2v), - v + \cosh v \sinh v (\sin^2 u - \cos^2 u), 0),
$$
 the associated translated surface in $\euclidean$ is given by
$$F(\pi, w) = \Re  \displaystyle ( 0, w + \frac{\sin(2w)}{2},
i (3w - \frac{\sin(2w)}{2}), -4 \cos w)=
$$
$$
=(0,  u + \sin u \cos u (\cosh^2 v+ \sinh^2v), - 3v - \cosh v \sinh v (\sin^2 u - \cos^2 u), -4 \cos u \cosh v),
$$
and both surfaces are associated by the $\theta$-family 
$$F(\theta, w) = 2 \Re  \int_{w_o}^w ((1+ e^{i\theta}) \sin \xi, 1+ e^{i\theta} \sin^2 \xi, i (1 - e^{i\theta} \sin^2\xi ), (1- e^{i\theta}) \sin \xi) d\xi.
$$
By formulas (\ref{taulema2.4}) and (\ref{nulema2.4}) in Lemma \ref{lema2.4}, the Gauss maps for $F(0, w)$ and $F(\pi, w)$ are given respectively by
$$
\tau(0, w) = \frac{1}{1- |\sin w|^2}(1+ |\sin w|^2, 2 \Re(\sin w), 2 \Im(\sin w), 0), \ \ \ \nu(0, w) = (0,0,0, -1).
$$
$$
\tau(\pi, w) = (1, 0, 0, 0), \ \ \ \nu(\pi, w) = \frac{1}{1+ |\sin w|^2}(0,  2 \Re(\sin w), 2 \Im(\sin w), -1+ |\sin w|^2).
$$

From formula (\ref{curvGausslema2.5}) the Gauss curvature for the two minimal surfaces are given by
$$
K(F(0, w)) = \frac{|\cos w|^2}{(1 - |\sin w|^2)^2}, \ \ \  \ K(F(\pi, w)) = - \frac{|\cos w|^2}{(1 + |\sin w|^2)^2}.
$$
Moreover, the Gauss curvature in the point $w$ belonging to the surface $F(\theta, w)$, with $\theta$ fixed, is given by
$$
K(F(\theta, w)) = \frac{|\cos w|^2 [(1+ |\sin w|^4) \cos \theta - 2 |\sin w|^2]}{(1 - 2 |\sin w|^2 \cos \theta + |\sin w|^4)^3}.
$$
\end{example}

\vspace{0.1cm}
\begin{example}
Let  $\mu(w) = e^{-w}$ and $a(w) = i e^w$ for  $w= u+iv$ with  $u>0$. Then the $\theta$-family is given by
$$F(\theta, w) = 2 \Re  \int_{w_o}^w  e^{-\xi} ((1+ e^{i\theta})ie^\xi, 1- e^{i\theta}e^{2\xi}, i (1 + e^{i\theta}e^{2\xi}), (1- e^{i\theta}) i e^\xi) d\xi = 
$$
$$
= - 2( u\sin \theta + v(1+\cos \theta), e^{-u} \cos v + e^{-v} \sin(\theta +u), e^{-u} \sin v - e^{-v} \cos (\theta +u), - u\sin \theta + v(1- \cos \theta)).
$$
The Gauss curvature for this family is never zero, because for all $(\theta, w$), 
$$
K(F(\theta, w)) =  e^{4u} \frac{(1+ e^{4u})\cos \theta - 2 e^{2u}}{(1-2 e^{2u} \cos \theta + e^{4u})^3},
$$
which never gets to be zero. Moreover,  the minimal spacelike surfaces in $\mathbb R^3_1$ and $\mathbb R^3$ associated to the family are given, respectively, by:
$$
 F(0, w) = -2 (2v, e^{-u} \cos v + e^{-v} \sin u, e^{-u} \sin v - e^{-v} \cos u, 0),
$$
$$
 F(\pi, w) = -2 (0, e^{-u} \cos v - e^{-v} \sin u, e^{-u} \sin v + e^{-v} \cos u, 2v),
$$
and for all $w$, $F(0, w)$ has positive Gauss curvature  unlike  $F(\pi, w)$ which has negative curvature in all its points.
\end{example}

\begin{example}
Let  $X_0(u,v) = (\sin u \cosh v, \sin u \sinh v, u, 0)$  a minimal spacelike isothermal surface in $\mathbb R^3_1$ and 
$X_{\pi} (u,v) = (0, v, -\cos u \cosh v, \sin u \cosh v)$ a minimal spacelike isothermal surface in $\mathbb R^3$.  Then one observes that the equations (\ref{eqassociadas}) are satisfied and hence there exists a $\theta$-family $F(\theta, w)$ which related the two surfaces. That $\theta$-family is given then  by equation (13) for  $$a(w) = \frac{i \cos w}{\sin w +1} \ \ \ {\rm and} \ \ \  \mu(w) = -\frac{i}{4}(\sin w +1),$$
where we have assumed that $w = u+iv$ is such that $\sin w \ne - 1$  and  $|a(w)| \ne 1$. More explicitly they are associated by the $\theta$-family 
$$F(\theta, w) = 2 \Re  \int_{w_o}^w  \displaystyle( (1+ e^{i\theta}) \cos \xi, -i(\sin \xi + 1- e^{i\theta}\frac{\cos^2 \xi}{\sin \xi +1}),  \sin \xi +1 + e^{i\theta} \frac{\cos^2 \xi}{\sin \xi +1}, (1- e^{i\theta}) \cos \xi) d\xi. 
$$
Now, from formula (\ref{curvGausslema2.5}) it follows that for all the surfaces of the family $F(\theta, w)$ there are not points in which the Gauss curvature is zero since $|a'(w)|^2 = \frac{1}{|\sin w +1|^2} \ne 0$ for all $w$.  
\end{example}

\subsection{The incompleteness of minimal surfaces} In this subsection we study the incompleteness of the associated pair $(U,X)$ and $(U,Y)$.

We start writing in components and in arbitrary parameters $(x,y)\in M$, the vector field 
$\tau(x,y) = (\tau^{0}(x,y), \tau^{1}(x,y), \tau^{2}(x,y), 0) \in \mathbb{H}^{2}.$ 
Now taking the hyperbolic stereographic projection for $\mathbb{H}^{2}$ onto the disk 
$\mathbb{D} =\{z \in \complex : z\overline{z} < 1 \}$ given by 
$$a(x,y) = (st_{h}(\tau(x,y)) = \left. \frac{\tau^{1} + i \tau^{2}}{\tau^{0} + 1}\right\vert_{(x,y)},$$ 
we see that the complex function $ a(x,y) \in \mathbb{D}$ is globally defined over $M$. 
At next, it defines the unique map $\nu(x,y)$ associated to $\tau(x,y)$ as
$$\nu(x,y) = \left. \frac{1}{\tau^{0}}(0, \tau^{1}, \tau^{2}, -1)\right\vert_{(x,y)},$$ 
and it considers the map $\Phi(\tau(x,y)) = \nu(x,y)$ from $\mathbb{H}^{2}$ onto the open South hemisphere $\mathbb{S}_{-}^{2}$ of the 
unit sphere of $\euclidean$. Then we obtain the following lemma. 

\begin{lemma}\label{lema2.7}
The map 
$$\Phi : \tau = (\tau^{0}, \tau^{1}, \tau^{2}, 0) \in \mathbb{H}^{2} \to
\nu = \frac{1}{\tau^{0}}(0, \tau^{1}, \tau^{2}, -1) \in \mathbb{S}_{-}^{2}$$ is a bi-holomorphism from the hyperbolic plane 
onto the South hemisphere of the Riemann sphere $\mathbb{S}^{2}$. 
\end{lemma}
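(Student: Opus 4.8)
The plan is to recognize $\Phi$ as the transition map between two standard conformal charts, and to check that in those charts it is simply the identity map of the unit disk.

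First I would fix the hyperboloid model $\hyper=\{(\tau^{0},\tau^{1},\tau^{2},0)\in\mink : -(\tau^{0})^{2}+(\tau^{1})^{2}+(\tau^{2})^{2}=-1,\ \tau^{0}>0\}$ and invert the hyperbolic stereographic projection: for $a\in\mathbb{D}$,
$$st_{h}^{-1}(a)=\left(\frac{1+|a|^{2}}{1-|a|^{2}},\ \frac{2\Re a}{1-|a|^{2}},\ \frac{2\Im a}{1-|a|^{2}},\ 0\right),$$
so that $st_{h}:\hyper\to\mathbb{D}$ is a conformal, orientation preserving diffeomorphism; with respect to the conformal structure that $\hyper$ carries as a Riemann surface — the one for which the coordinate $a=st_{h}(\tau)$ used throughout the paper is holomorphic — it is a biholomorphism. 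Likewise I would recall that the stereographic projection of the unit sphere $\mathbb{S}^{2}\subset\euclidean$ from the North pole $(0,0,0,1)$, namely $st(0,\nu^{1},\nu^{2},\nu^{3})=\dfrac{\nu^{1}+i\nu^{2}}{1-\nu^{3}}$, restricts to a biholomorphism of the South hemisphere $\mathbb{S}_{-}^{2}=\{\nu\in\mathbb{S}^{2}:\nu^{3}<0\}$ onto $\mathbb{D}$.

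The key step is a direct computation. Given $\tau=(\tau^{0},\tau^{1},\tau^{2},0)\in\hyper$, the relation $(\tau^{0})^{2}=1+(\tau^{1})^{2}+(\tau^{2})^{2}$ shows that $\nu=\Phi(\tau)=\tfrac{1}{\tau^{0}}(0,\tau^{1},\tau^{2},-1)$ satisfies $(\nu^{1})^{2}+(\nu^{2})^{2}+(\nu^{3})^{2}=1$ and $\nu^{3}=-1/\tau^{0}<0$, so $\Phi$ is well defined as a map $\hyper\to\mathbb{S}_{-}^{2}$. Moreover
$$st(\Phi(\tau))=\frac{\tau^{1}/\tau^{0}+i\,\tau^{2}/\tau^{0}}{1+1/\tau^{0}}=\frac{\tau^{1}+i\tau^{2}}{\tau^{0}+1}=st_{h}(\tau),$$
that is, $st\circ\Phi=st_{h}$, equivalently $\Phi=st^{-1}\circ st_{h}$. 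Since $st_{h}:\hyper\to\mathbb{D}$ and $st:\mathbb{S}_{-}^{2}\to\mathbb{D}$ are biholomorphisms and $st_{h}(\hyper)=\mathbb{D}=st(\mathbb{S}_{-}^{2})$, it follows that $\Phi$ is a biholomorphism from $\hyper$ onto $\mathbb{S}_{-}^{2}$.

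This is more a clean observation than a genuine obstacle: once the identity $st\circ\Phi=st_{h}$ is in hand the conclusion is immediate. The only points demanding a little care are the verification that $\Phi$ lands in the South hemisphere (which uses the hyperboloid relation) and being explicit that ``biholomorphism'' is understood with respect to the conformal structures for which $st_{h}$ and $st$ — equivalently the globally defined coordinate $a$ and the Gauss data $\nu$ appearing above — are holomorphic, so that the statement is read against the intended Riemann-surface structures.
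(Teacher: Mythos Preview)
Your argument is correct and follows essentially the same route as the paper: both verify that $\Phi$ lands in $\mathbb{S}_{-}^{2}$ using the hyperboloid relation and then observe that, in the disk charts given by the hyperbolic and north-pole stereographic projections, $\Phi$ becomes the identity of $\mathbb{D}$ (the paper writes this as $z=(st_{north})\circ\Phi\circ st_{h}^{-1}(z)$ for $z\in\mathbb{D}$). Your write-up is in fact more explicit and careful about the conformal structures involved than the paper's one-line conclusion.
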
 

\begin{proof}
First we note that since $(\tau^{0})^{2} = 1 + (\tau^{1})^{2} + (\tau^{2})^{2}$ it follows that $\lpr{\nu}{\nu} = 1$. On the other hand, using the north pole stereographic projection $st_{north}$ from $\mathbb{S}^{2}$ onto the equatorial plane $\complex,$ it follows that the South hemisphere maps onto 
the disk $\mathbb{D}$. If $z \in \mathbb{D}$ then $z = (st_{north}) \circ \Phi \circ st_{h}(z).$   
\end{proof} 

From the Lemma \ref{lema2.7}  it follows the following fact.

\begin{corol}\label{1969}
A $\theta$-family transports each minimal spacelike surface $(M,X)$ of $\real^3_1$ to a minimal surface $(M,Y)$ of $\euclidean$. 
Moreover, the complex Gauss map is defined in $M$ by the map $\tau = \tau(p)$, $p \in M$, and the map $a$ is given by
$$a(\tau) = \frac{\tau^{1} + i \tau^{2}}{\tau^{0} + 1} \; \; \; \mbox{ with } \; \; \; 
\vert a(\tau) \vert = \sqrt{\frac{\tau^{0} - 1}{\tau^{0} + 1}\; }.$$
\end{corol}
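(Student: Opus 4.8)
The statement of Corollary \ref{1969} has two parts: the existence of the transport (which is essentially a restatement of what Theorem \ref{lema2.5} and the preceding constructions already give us) and the explicit formula for the complex Gauss map $a(\tau)$ together with the identity $|a(\tau)| = \sqrt{(\tau^0-1)/(\tau^0+1)}$. My plan is to deduce the first part directly from Lemma \ref{lema2.7} and Definition \ref{dfnfamily}, and to obtain the second part by a short explicit computation using the hyperbolic stereographic projection $st_h$ and the relation $(\tau^0)^2 = 1 + (\tau^1)^2 + (\tau^2)^2$ that holds because $\tau \in \mathbb{H}^2$.

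First I would recall the setup: given a minimal spacelike surface $(M,X)$ in $\real^3_1 = \{p : \lpr{p}{\partial_3} = 0\}$, its timelike Gauss map $\tau$ takes values in $\mathbb{H}^2 \subset \real^3_1$, and by the discussion preceding Lemma \ref{lema2.7} the function $a(\tau) = st_h(\tau) = (\tau^1 + i\tau^2)/(\tau^0+1)$ is globally defined on $M$ with values in the disk $\mathbb D$, and (after passing to isothermal coordinates) $a$ and the integrating factor $\mu$ are holomorphic by Lemma \ref{lema1.7}. Then the $\theta$-family $F(\theta;w)$ of Definition \ref{dfnfamily} built from this pair $(a,\mu)$ has $F(0;w) = X(w)$ (up to the base point) lying in $\real^3_1$ and $F(\pi;w) = Y(w)$ lying in $\euclidean$; Lemma \ref{lema2.7} identifies the corresponding spacelike normal $\nu = \frac{1}{\tau^0}(0,\tau^1,\tau^2,-1)$ as the Euclidean Gauss map of $Y$, realizing $\Phi$ as a biholomorphism $\mathbb{H}^2 \to \mathbb{S}^2_-$. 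This establishes the transport and shows the complex Gauss map on $M$ is $a(\tau)$ with the stated formula.

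For the norm identity I would just compute directly:
\[
|a(\tau)|^2 = \frac{(\tau^1)^2 + (\tau^2)^2}{(\tau^0+1)^2} = \frac{(\tau^0)^2 - 1}{(\tau^0+1)^2} = \frac{(\tau^0-1)(\tau^0+1)}{(\tau^0+1)^2} = \frac{\tau^0-1}{\tau^0+1},
\]
using $(\tau^1)^2 + (\tau^2)^2 = (\tau^0)^2 - 1$ (valid since $\tau \in \mathbb{H}^2$ is future-directed, so $\tau^0 \geq 1$ and the denominator is positive), and then take square roots. This is the only real computation and it is routine.

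The main (and only mild) obstacle is bookkeeping rather than mathematics: one must make sure that the $a$ appearing here — defined geometrically as $st_h \circ \tau$ — is literally the same holomorphic function $a(w)$ used to parametrize the $\theta$-family in Theorem \ref{lema2.5}, i.e. that the Weierstrass data of $X$ recovered from its Gauss map coincide with the data plugged into $F(\theta;w)$. This is exactly the content of the identification $\tau^0 = (1 + a\bar a)/(1 - a\bar a)$ (equivalently $a = st_h(\tau)$) already used in the proof of the Proposition computing $\Omega$, so I would invoke that consistency and then the corollary follows.
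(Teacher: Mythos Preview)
Your proposal is correct and follows the same line the paper intends: the paper gives no separate proof for this corollary beyond the sentence ``From the Lemma \ref{lema2.7} it follows the following fact,'' so your argument---invoking the $\theta$-family construction for the transport and computing $|a(\tau)|^2$ via $(\tau^1)^2+(\tau^2)^2=(\tau^0)^2-1$---simply fills in the details the authors leave implicit. Your bookkeeping remark about identifying $st_h\circ\tau$ with the Weierstrass datum $a(w)$ is appropriate and matches the computation of $\tau^0=(1+a\bar a)/(1-a\bar a)$ used earlier in the paper.
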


\vspace{0.1cm}
Next corollary is consequence of the Lemma \ref{lema2.7} combined with a classic result duo to R. Osserman in \cite{7}, namely
\begin{lemma}[Osserman] The image under the Gauss map of a complete regular non plane minimal surface of the 
Euclidean space $\euclidean$ is dense in the Riemann sphere.
\end{lemma}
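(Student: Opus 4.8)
This is Osserman's classical theorem; the line of argument one would give runs as follows. The plan is to argue by contradiction: suppose $S\subset\euclidean$ is a complete, regular, non-planar minimal surface whose Gauss image omits a nonempty open subset $\Omega\subset\mathbb{S}^{2}$. First I would replace $S$ by its universal cover, which remains complete, regular, minimal and non-planar and whose Gauss map still omits $\Omega$; being a simply connected noncompact Riemann surface, by the Uniformization Theorem it is conformally either $\complex$ or the unit disk $\mathbb{D}$.

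Next I would put the Weierstrass representation to work. Writing the immersion as $\Re\int(\phi_{1},\phi_{2},\phi_{3})$ with the $\phi_{j}$ holomorphic and $\sum_{j}\phi_{j}^{2}=0$, set $g=\phi_{3}/(\phi_{1}-i\phi_{2})$, the stereographic projection of the Gauss map, and $\eta\,dz=\phi_{1}-i\phi_{2}$, so that the induced metric is $ds^{2}=c\,|\eta|^{2}(1+|g|^{2})^{2}\,|dz|^{2}$ for a positive constant $c$. Choosing a point of $\Omega$ and rotating $\euclidean$ so as to carry it to the north pole, I may assume $g$ omits a neighbourhood of $\infty$; then $g$ is bounded, $|g|\le R$, has no poles, and regularity forces $|\eta|(1+|g|^{2})>0$, so $\eta$ never vanishes. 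If the universal cover is $\complex$ the proof ends here: $g$ is a bounded entire function, hence constant, so $S$ is a plane, against the hypothesis.

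The remaining case, the universal cover being $\mathbb{D}$, is the one I expect to be the main obstacle, and the idea is that a bounded Gauss map renders the metric comparable to a flat one. Since $\mathbb{D}$ is simply connected and $\eta$ is nowhere zero, $F(z)=\int_{0}^{z}\eta\,d\zeta$ is a well-defined holomorphic function with $F'=\eta\ne 0$, hence a local biholomorphism, and $|dF|^{2}=|\eta|^{2}|dz|^{2}$ is a flat metric on $\mathbb{D}$. From $1\le(1+|g|^{2})^{2}\le(1+R^{2})^{2}$ one gets $ds\le C\,|dF|$ pointwise, so, completeness of $ds^{2}$ forcing every divergent path in $\mathbb{D}$ to have infinite $ds$-length, each such path also has infinite $|dF|$-length; hence $|dF|^{2}$ is complete. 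Then $F$, a local isometry from a complete manifold into the connected manifold $\complex$, is a Riemannian covering of $\complex$, and since $\complex$ is simply connected this covering is one-sheeted, so $F:\mathbb{D}\to\complex$ would be a biholomorphism — which is impossible. This contradiction would complete the proof. The points needing care are that regularity together with $|g|\le R$ genuinely forces $\eta\ne 0$ everywhere, so that $F$ is a true local biholomorphism, and that the completeness transfer uses $|g|$ being bounded \emph{above}, which is exactly why one first rotates so that $\Omega$ surrounds $\infty$; alternatively one could bypass the covering-map step by quoting the classification of complete simply connected flat surfaces to obtain directly an isometry, hence a biholomorphism, $\complex\cong\mathbb{D}$.
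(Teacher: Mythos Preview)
Your sketch is a correct outline of the classical proof of Osserman's density theorem: pass to the simply connected cover, uniformize, use the Weierstrass data to see that a bounded Gauss map makes $g$ entire and bounded (hence constant) in the parabolic case, and in the hyperbolic case renders the induced metric uniformly comparable to the flat metric $|dF|^{2}$ with $F=\int\eta$, whence completeness is inherited and a covering argument yields a biholomorphism $\mathbb{D}\cong\complex$, a contradiction. The small technical points you flag (that after rotating the omitted set to surround $\infty$ the new $g$ is bounded and holomorphic, so regularity forces the corresponding $\eta$ to be zero-free; and that the comparison $ds\le C\,|dF|$ is the inequality needed to push completeness from $ds^{2}$ to $|dF|^{2}$) are handled correctly.

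However, there is nothing to compare against in the paper: the authors do \emph{not} prove this lemma. It is quoted verbatim as ``a classic result due to R.~Osserman'' with a reference to \cite{7}, and is then used as a black box in the proof of Corollary~\ref{9}. So your write-up goes well beyond what the paper itself supplies; it is a faithful reconstruction of Osserman's original argument rather than a parallel to anything in the present article.
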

\begin{corol}\label{9}
If $(M,X)$ in $\real^3_1$ and $(M,Y)$ in $\euclidean$ are two minimal surfaces associated each other 
by a $\theta$-family, then these two surfaces are not complete. 
\end{corol}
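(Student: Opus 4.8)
The plan is to argue by contradiction, using the bi-holomorphism $\Phi$ of Lemma \ref{lema2.7} to transplant the Osserman density statement from the Euclidean side to the Lorentzian side. Suppose, for contradiction, that one of the two associated surfaces is complete. I will treat the Euclidean surface $(M,Y)$ first. If $(M,Y)$ were a complete non-planar minimal surface in $\euclidean$, then by the Osserman Lemma its Gauss image $\nu(M)$ would be dense in the Riemann sphere $\mathbb S^2$. But by Corollary \ref{1969} the Gauss map of $Y$ has the form $\nu = \tfrac{1}{\tau^0}(0,\tau^1,\tau^2,-1)$, where $\tau$ is the Gauss map of $X$ taking values in $\mathbb H^2$; hence $\nu(M) = \Phi(\tau(M)) \subseteq \Phi(\mathbb H^2) = \mathbb S^2_{-}$, the open South hemisphere. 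Since $\mathbb S^2_{-}$ is not dense in $\mathbb S^2$ (its closure is a closed hemisphere, which omits the open North hemisphere), this contradicts density. Therefore $(M,Y)$ is not complete — unless $(M,Y)$ is planar, which must be handled as a separate case.

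For the planar case: if $Y(M)$ is contained in a plane of $\euclidean$, then $K(Y)\equiv 0$, so $a'\equiv 0$ by Theorem \ref{lema2.5} (every point is planar), hence $a$ is a constant holomorphic function. But then the curvature formula (\ref{curvGausslema2.5}) shows $K(F(\theta;w))\equiv 0$ for every $\theta$; in particular $K(X)\equiv 0$, so $X(M)$ is a plane as well, and neither surface is complete — indeed, a complete flat spacelike surface in $\real^3_1$ is a whole affine plane, and one checks directly that the Weierstrass data with $a$ constant parametrizes only a strip or proper subset rather than an entire plane (alternatively, one invokes that the domain $M$ is either $\mathbb C$ or the disk $D$, and completeness would force specific incompatible behavior of $\mu$). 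More cleanly, one may simply observe that in the flat case the two surfaces are planes related by a $\theta$-family, and the statement is then part of Lemma \ref{lema3.6}, which characterizes exactly this totally flat situation; it suffices to note that such a family cannot consist of complete surfaces because the metric $\mathbf g = 4|\mu|^2(1-2|a|^2\cos\theta+|a|^4)|dw|^2$ with $a$ constant has $\mu$ holomorphic on $M$, and for $M=\mathbb C$ a nonvanishing holomorphic $\mu$ with $\int|\mu|\,|dw|=\infty$ along every divergent path would be needed, which one rules out, while for $M=D$ the conformal type already obstructs completeness.

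Finally, to conclude that $(M,X)$ is also not complete: the metrics of $X$ and $Y$ are $\mathbf g(0;w) = 4|\mu|^2(1-|a|^2)^2|dw|^2$ and $\mathbf g(\pi;w) = 4|\mu|^2(1+|a|^2)^2|dw|^2$ respectively, so pointwise $\mathbf g(\pi;w) \ge \mathbf g(0;w)$, i.e. the Euclidean metric dominates the Lorentzian one. Hence if $(M,X)$ were complete then so would be $(M,Y)$, and we have just excluded that; therefore $(M,X)$ is not complete either. This monotonicity between the two end-member metrics of the family is the linchpin that lets a single density argument kill completeness on both sides at once.

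The main obstacle I anticipate is the flat/planar case: the Osserman Lemma is vacuous there (it is stated for non-planar surfaces), so one genuinely needs the separate argument via constancy of $a$, formula (\ref{curvGausslema2.5}), and the conformal-type dichotomy $M\in\{\mathbb C, D\}$ — and one must be a little careful that "not complete" is actually true for flat members of a $\theta$-family, rather than merely not covered by Osserman. Everything else (the hemisphere is not dense; the metric comparison $\mathbf g(\pi)\ge\mathbf g(0)$) is routine given the results already established in the excerpt.
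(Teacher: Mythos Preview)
Your core argument for the non-planar case is exactly the paper's proof: $\nu(M)=\Phi(\tau(M))\subset\mathbb S^2_{-}$ omits the open North hemisphere, so Osserman's density lemma forbids completeness of $(M,Y)$. The paper's entire proof is those two sentences; it simply asserts that ``both surfaces are not complete'' without the metric comparison you give. Your inequality $\mathbf g(\pi;w)\ge\mathbf g(0;w)$ is a genuine addition that actually justifies the word ``both'', which the paper leaves unargued.

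Where your proposal has a real gap is the planar case, and you are right to flag it as the main obstacle. None of the three arguments you offer works. First, with $a\equiv a_0$ constant and $\mu\equiv 1$ on $M=\complex$, the Weierstrass integral $2\Re\int W(a_0,a_0)\,d\xi$ is an affine-linear map of $\complex$ onto a spacelike plane, so it does parametrize the \emph{entire} plane, not ``only a strip or proper subset''. Second, Lemma~\ref{lema3.6} says nothing about completeness; it only records the equivalence of flatness across the family. Third, for $M=D$ there is no general obstruction of conformal type to completeness of a flat metric $c\,|\mu|^2|dw|^2$ (take $\mu(w)=(1-w)^{-2}$, for instance). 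In fact the pair $M=\complex$, $a$ constant with $|a|<1$, $\mu$ a nonzero constant, furnishes complete planes in both $\real^3_1$ and $\euclidean$ associated by a $\theta$-family, so the corollary as literally stated is false for planes. The paper's own proof silently uses the hypothesis ``non-plane'' from the Osserman Lemma and never revisits the planar case; you should read the statement with that tacit restriction rather than try to rescue it.
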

\begin{proof} In fact, from the hypotheses and Lemma \ref{lema2.7}  it follows that $\nu(M)$ omits all (open and non empty) north hemisphere of the Riemann sphere. Hence it follows from the Osserman Lemma that both surfaces are not complete.
\end{proof}

In other direction, we will use the Gauss curvature formula to characterize the set of planar points of the surfaces of $\theta$-family. In fact, equation (\ref{curvGausslema2.5}) is well defined because we are assuming $\mu (w)\ne 0$ and there are not zeros of equation  $1 - 2\cos \theta \vert a \vert^{2} + \vert a \vert^{4} = 0$ since $\{w : \vert a(w) \vert = 1\} = \emptyset$.  Moreover, for all $\theta \in \mathbb R$ one has $\cos \theta \ne \frac{2|a|^2}{1+ |a|^4}$. Hence the Gauss curvature  is zero  in a point $w$ if and only if $|a'(w)|^2 =0$.
 Therefore it follows that if for a surface of the $\theta$-family whole its points are planar then all the other surfaces have also whole its points planar. More explicitly one has the following lemma. 
\begin{lemma}\label{lema3.6}
In a $\theta$-family $(U, F(\theta,w))$ the following statements are equivalents:
\begin{itemize}
\item[(1)] There exists $\theta_{0}\in\real$ such that for each $w \in U$ the curvature $K(F(\theta_{0},w)) = 0$.  
\item[(2)] The map of isothermal representation $a(w)$ is constant in $U$.
\item[(3)] The normal maps $L_{3}(a)$ and $L_{0}(a e^{i\theta})$ are constants from $U$ in the lightcone $\mathcal{C}$.
\item[(4)] For all $\theta \in \real$ the planar points  set $\mbox{Pl}(F(\theta;U)) = U.$
\end{itemize}
\end{lemma}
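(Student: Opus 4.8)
The plan is to prove the chain of equivalences $(1)\Leftrightarrow(2)\Leftrightarrow(3)\Leftrightarrow(4)$ by establishing a cycle, most naturally $(1)\Rightarrow(2)\Rightarrow(3)\Rightarrow(4)\Rightarrow(1)$, relying throughout on the Gauss curvature formula (\ref{curvGausslema2.5}) from Theorem \ref{lema2.5} and on the explicit forms of the normal vectors $L_3(a)$ and $L_0(e^{i\theta}a)$ in (\ref{aplicacaoL3}), (\ref{aplicacaoL0}).

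First I would prove $(1)\Rightarrow(2)$. Fix $\theta_0$ with $K(F(\theta_0;w))\equiv 0$ on $U$. By the discussion immediately preceding the lemma, the denominator $\vert\mu\vert^2(1-2\vert a\vert^2\cos\theta_0+\vert a\vert^4)^3$ in (\ref{curvGausslema2.5}) never vanishes (since $\mu\neq 0$ and $\vert a(w)\vert\neq 1$ everywhere, so $\cos\theta_0\neq \frac{2\vert a\vert^2}{1+\vert a\vert^4}$ as remarked), hence $K(F(\theta_0;w))=0$ forces $\vert a'(w)\vert^2=0$, i.e. $a'(w)=0$ for every $w\in U$. Since $a$ is holomorphic (Theorem \ref{lema2.5}) and $U$ is connected, $a'\equiv 0$ implies $a$ is constant on $U$, which is statement (2). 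For $(2)\Rightarrow(3)$: if $a(w)\equiv a_0$ is constant, then reading off the expressions (\ref{aplicacaoL3}) and (\ref{aplicacaoL0}), both $L_3(a(w))=L_3(a_0)$ and $L_0(e^{i\theta}a(w))=L_0(e^{i\theta}a_0)$ are constant vectors in $\real^4_1$; each is lightlike by construction (this was used in Lemma \ref{lema2.4} via $\lpr{L_0(ae^{i\theta})}{L_3(a)}=-2\vert 1-\overline{a}e^{i\theta}a\vert^2$ and the fact that each $L$ individually lies on the lightcone $\C$), giving (3). For $(3)\Rightarrow(2)$ (useful to close the logic cleanly, though not strictly needed for the cycle), note that the third coordinate $L_3^3(a)=-1+a\overline a$ being constant already forces $\vert a\vert$ constant; combined with, say, constancy of $L_3^1(a)=a+\overline a$ and $L_3^2(a)=-i(a-\overline a)$ one recovers that $a$ itself is constant.

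Next, $(2)\Rightarrow(4)$: if $a$ is constant then $a'\equiv 0$, so by the last sentence of Theorem \ref{lema2.5} the set of planar points of $F(\theta;U)$ is $\{w:a'(w)=0\}=U$ for every $\theta\in\real$; that is $\mathrm{Pl}(F(\theta;U))=U$, which is (4). Finally $(4)\Rightarrow(1)$ is immediate: (4) says in particular $\mathrm{Pl}(F(\theta_0;U))=U$ for, say, $\theta_0=0$, which by definition of the planar-point set means $K(F(0;w))=0$ for all $w\in U$, giving (1) with this choice of $\theta_0$. Assembling the implications $(1)\Rightarrow(2)\Rightarrow(3)\Rightarrow(2)\Rightarrow(4)\Rightarrow(1)$ yields the full equivalence.

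I do not expect any serious obstacle here: the whole lemma is essentially a repackaging of the curvature formula (\ref{curvGausslema2.5}) together with holomorphicity of $a$, and each step is a short deduction. The only point requiring a modicum of care is the non-vanishing of the denominator in (\ref{curvGausslema2.5}), i.e. that $1-2\vert a\vert^2\cos\theta+\vert a\vert^4>0$ everywhere — but this is already justified in the paragraph preceding the lemma using $\{w:\vert a(w)\vert=1\}=\emptyset$, so I would simply cite that. I would also be slightly careful to invoke connectedness of $U$ when passing from $a'\equiv 0$ to $a$ constant, since that is what legitimately upgrades a local statement to a global one.
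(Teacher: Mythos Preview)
Your proposal is correct and follows essentially the same approach as the paper's own proof: the paper argues $(1)\Leftrightarrow(2)$ directly from formula (\ref{curvGausslema2.5}), $(2)\Leftrightarrow(3)$ from the explicit expressions (\ref{aplicacaoL3})--(\ref{aplicacaoL0}), and $(4)\Leftrightarrow(2)$ again from (\ref{curvGausslema2.5}), exactly as you do, only in a single terse sentence each. Your added care about connectedness of $U$ (to pass from $a'\equiv 0$ to $a$ constant) and about the non-vanishing of the denominator in (\ref{curvGausslema2.5}) simply makes explicit what the paper leaves implicit.
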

\begin{proof}
Immediately from (\ref{curvGausslema2.5}), we have $(1)\Leftrightarrow (2).$
Using (\ref{aplicacaoL0}) and (\ref{aplicacaoL3}), one obtains $(2)\Leftrightarrow (3).$
Finally, it notes that since $(4)\Leftrightarrow (2),$ then $(4)\Leftrightarrow (3).$
\end{proof}

\section{Application to Graph surfaces of $\mink$} 
\label{sec:3}

Through of this paper we have seen that our results are based in the existence of local isothermal coordinates in which we have the representation formula using the maps $\mu(w)$ and $a(w)$. Here we consider a particular type of graph surfaces in $\mathbb R^4_1$ and we identify the local conditions to the existence of explicit isothermal coordinates in which we can represent it as the graph of some smooth function $A(u,v)$.

\vspace{0.1cm}
It assumes the graph surface $X:U \subseteq \real^{2} \to \mink$ given by 
$$X(x,y) = (x, y, A(x,y), B(x,y))$$ 
where we  have assumed that the functions $A$, $B$ are $C^{\infty}(U,\real),$ $U$ is a connected, simply 
connected open subset of $\real^{2}$ and that the surface traced by the map $X$ is a spacelike surface of $\mink$.  We note that  if $B(x,y) \equiv 0$ one has the spacelike graph in $\real^3_1$ given by 
\begin{equation}\label{17}
X(x,y) = (x,y, A(x,y), 0) \in \real^3_1,
\end{equation}
with the induced metric tensor over $X(U)$, seen as a spacelike submanifold of $\mink.$ 

\vspace{0.2cm}
Hence if we consider  $X(x,y) = (x,y,A(x,y),0)$ to be a spacelike minimal   graph on $\real_{1}^{3}$,  its normal Gauss map $\tau$ and the normal maps  $L_0(X), 
L_3(X)$  take the form  
\begin{equation}\label{16}
\tau(x,y) = \frac{1}{\sqrt{(A_{x})^{2} - (A_{y})^{2} - 1\;}}(A_{x},-A_{y},1,0)
\end{equation}
 \begin{align}
L_{0}(X) = \left(A_{x},-A_{y},1, \sqrt{(A_{x})^{2} - (A_{y})^{2} - 1\;}\right)\label{eq33} \\ 
L_{3}(X) = \left(A_{x},-A_{y},1, -\sqrt{(A_{x})^{2} - (A_{y})^{2} - 1\;}\right).\label{eq34}
\end{align}

We observe that for obtaining the expressions (\ref{eq33}) and (\ref{eq34}) we have used the expressions for $a(w)$ in Corollary (\ref{1969}) and the expressions of $L_0(a), L_3(ae^{i\theta})$ given by (\ref{aplicacaoL3}) and (\ref{aplicacaoL0}) for $\tau$ being given by (\ref{16}).

 In this moment we note that from \cite{DuFF} one knows that a local representation of the surface $S  \subset \mathbb R^4_1$
is given by   $f_{w} = \mu \left(a + b, 1 + ab, i(1 -ab),a - b \right)$. Since we are interested when $a = b,$ one obtains the
representation
$f_{w} = \mu(2a,1 + a^{2}, i(1 - a^{2}),0).$ \ Now, let $z(w) = x(w) + i y(w)$  be a transformation of coordinate system wrote as 
\begin{equation}\label{eq36} \left\{\begin{matrix}
x = x(u,v) \\ y = y(u,v).
\end{matrix} \right.
\end{equation}
So, assuming that the graph (\ref{17}) is minimal, we can write the graph as a  minimal parametric surface $(U,f)$ with $f_{w} = \mu(2a,1 + a^{2},i(1 - a^{2}),0)$, for $a, \mu$  holomorphic functions, if the system 
\begin{equation}\label{eq37} \begin{cases}
x_{w} = 2 \mu a \\ y_{w} = \mu(1 + a^{2}) \\ A_{x} x_{w} + A_{y} y_{w} = \mu i(1 - a^{2})
\end{cases} 
\end{equation}
 is satisfied and if, additionally the Jacobian of the coordinate transformation (\ref{eq36})  is different from zero. But the latter condition is satisfied if and only if the imaginary part of  $a(w)$ is different from zero. In fact, since $$2\mu\overline{\mu}[a(1 + \overline{a}^{2}) - \overline{a}(1 + a^{2})] = 2\mu\overline{\mu}(1 + a \overline{a})(a - \overline{a}) = 4i \mu\overline{\mu}(1 - a \overline{a}) \Im(a),$$
 the Jacobian of the coordinate transformation is zero whenever 
\begin{equation}\label{eq37}
x_{w} y_{\overline{w}} - x_{\overline{w}} y_{w} = 0 \; \; \mbox{ if and only if } \; \; \Im(a) = 0.
\end{equation} 

\vspace{0.2cm}
The last fact suggests us the following lemma which follows from the Picard Theorem.

\begin{lemma}\label{lema4.8}
Let $X(x,y) = (x,y,A(x,y),0)$ be a minimal spacelike graph on $\real_{1}^{3}$ of a smooth function $A(x,y)$ defined for all points 
$(x,y) \in \complex$.  If there exists a subspace $S^{*} \subset S = (\complex,X)$  that is dense connected open subset of $S$  such that $\forall p \in S^{*}$, 
$\Im(a(p)) \neq 0,$
where $a(p)$ is the complex Gauss map associated to the surface $S = X(\complex),$ then  
$a(p)$ is a constant function. The latter means that $S$ is a spacelike plane of $\real_{1}^{3}$.    
\end{lemma}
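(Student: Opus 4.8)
\emph{Sketch of the intended argument.} The plan is to reduce everything to a single application of the Picard theorem to the complex Gauss map $a$, viewed as a holomorphic function on the (simply connected) Riemann surface $M$ underlying $S$. Recall that $M$ is conformally either $\complex$ or the disk $\mathbb D$ (Section 2), and that in an isothermal parameter $w$ the map $a$ is holomorphic with $|a|<1$ (Lemma \ref{lema1.7}, Corollary \ref{1969}, Lemma \ref{lema2.7}). The hypothesis that $\Im(a)\neq 0$ on the connected dense open set $S^{*}$ says, by continuity, that $\Im(a)$ has a fixed sign on $S^{*}$; and by (\ref{eq37}) it says that the change of variables (\ref{eq36}) between the graph parameters $(x,y)$ and the isothermal parameter $w$ is a local diffeomorphism there, so that on $S^{*}$ the representation (\ref{eq37}) with holomorphic $a,\mu$ genuinely holds.

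The decisive step is to show $M\cong\complex$. Since $S=X(\complex)$ is a graph over all of $\real^{2}\cong\complex$ and $S$ is minimal, the coordinate functions $X^{0}=x$ and $X^{1}=y$ are harmonic on $M$ (the corollary following Lemma \ref{lema1.7}); hence $z:=X^{0}+iX^{1}\colon M\to\complex$ is a harmonic map, and it is precisely the inverse of the graph parametrization, so it is a homeomorphism of $M$ onto all of $\complex$, which by (\ref{eq37}) and the hypothesis is moreover a diffeomorphism on $S^{*}$ (hence, by Lewy's theorem, on $M$). Equivalently, one can argue directly on $A$: the spacelike condition $A_{x}^{2}-A_{y}^{2}-1>0$ forces $|A_{x}|>1$ everywhere, so $A_{x}$ has constant sign and $(x,y)\mapsto(y,A(x,y))$ is a diffeomorphism of $\real^{2}$ onto $\real^{2}$; thus $S$ is an \emph{entire} spacelike maximal graph over the spacelike plane $\{X^{0}=0\}$ with $u_{X^{1}}^{2}+u_{X^{2}}^{2}=(A_{y}^{2}+1)/A_{x}^{2}<1$. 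In either formulation, the conformal type of an entire spacelike minimal graph is parabolic, i.e.\ $M\cong\complex$; this is the Calabi--Bernstein phenomenon, and it is the only nontrivial input.

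Granting $M\cong\complex$, the map $a\colon\complex\to\mathbb D$ is entire and omits every point of $\complex\setminus\mathbb D$ (in particular more than two values), so by the Picard theorem (equivalently, by Liouville's theorem, since $a$ is bounded) it is constant. A constant $a$ means a constant hyperbolic Gauss map $\tau$ (since $\tau$ is recovered from $a$ via $st_{h}^{-1}$), hence a constant tangent plane distribution, hence $S$ is contained in an affine plane; being an entire graph over $\real^{2}$, $S$ equals that plane, which is spacelike because $S$ is. This gives that $a$ is constant and that $S$ is a spacelike plane of $\real^{3}_{1}$.

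I expect the genuine work to be hidden in the parabolicity statement $M\cong\complex$: the induced Riemannian metric of such an entire graph need not be complete (a ray in the $(x,y)$-plane can have finite length on $S$), so one cannot simply quote ``complete $\Rightarrow$ parabolic''. The global graph structure must be exploited instead — through the properness of the harmonic homeomorphism $z\colon M\to\complex$, a harmonic homeomorphism onto the parabolic surface $\complex$ forcing a parabolic domain, or through the reduction to a graph over a spacelike plane and the classical Calabi--Bernstein/Bernstein theorems; this is exactly where the entire-graph hypothesis and the Picard theorem enter essentially. Everything else — the sign discussion, the nonvanishing of the relevant Jacobian via $\Im(a)\neq0$, the boundedness of $a$, and ``constant Gauss map $\Rightarrow$ plane'' — is routine.
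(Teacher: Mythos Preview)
Your argument is correct but follows a genuinely different route from the paper's. The paper's proof is quite terse: it fixes a sign, say $\Im(a)>0$ on the connected set $S^{*}$, extends $\Im(a)\geq 0$ to all of $S$ by density, notes that the projection $pr(x,y)=x$ maps $S^{*}$ onto $\real$, and then simply invokes the Picard theorem. In particular, the paper never explicitly discusses the conformal type of the underlying Riemann surface $M$; the passage from ``$a$ omits a half--plane'' to ``$a$ is constant'' is asserted directly.

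By contrast, you isolate the parabolicity step $M\cong\complex$ as the crux, and you supply two independent justifications (the harmonic diffeomorphism $X^{0}+iX^{1}\colon M\to\complex$ together with Heinz/Lewy, or the reduction to an entire maximal graph over a spacelike plane via $|A_{x}|>1$ and Calabi--Bernstein). This makes the application of Picard/Liouville fully rigorous. It is worth observing that your argument \emph{does not use the hypothesis $\Im(a)\neq 0$ in any essential way}: once $M\cong\complex$ is established, the bound $|a|<1$ from Corollary~\ref{1969} already forces $a$ constant by Liouville, so the sign hypothesis becomes superfluous and you actually prove a stronger statement. The trade--off is that you import heavier external results (Heinz's nonexistence theorem or Calabi's Bernstein--type theorem), whereas the paper's intended argument seems to aim for something more self--contained by exploiting the restricted range $\{\Im(a)\geq 0\}\cap\mathbb{D}$ directly --- though, as you implicitly note, that route still owes the reader an explanation of why the isothermal parameter domain is all of $\complex$.
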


\begin{proof}
Without loss of generality, it assumes that in $p_{0} \in S^{*}$ we have $\Im(a(p_{0})) > 0$.  By the connectedness of $S^{*}$ it follows that $\Im(a(p)) > 0$ for all $p \in S^{*}$. Since $S^{*}$ is dense in $S$ and surfaces are metrizable topological spaces, we have that for each $q \in S$, 
$\Im(a(q)) = \displaystyle\lim_{n \rightarrow \infty} \Im(a(p_{n}))$ where $\{p_{n}: n \in \mathbb{N}\} \subset S^{*}$ is a sequence that converge to $q \in S$. 
Now, by the assumption that $S^{*}$ is a dense connected open subset of $S$ and using the fact  that the projection $pr : \real^{2} \to \real$ given by $pr(x,y) = x$ is an onto continuous 
open map, we obtain $pr(S^{*}) = \real.$ So, the statement follows from the Picard Theorem.
\end{proof}
Even more, we obtain  informations about the planar points set for a general kind of minimal isothermal surface of $\real^3_1$ as follows.
\begin{prop}
Let $(M,f)$ be a connected minimal isothermal surface of $\real^3_1$ with 
\begin{equation}
f_{w} = \mu \left (2a,1 + a^{2},i(1 - a^{2}),0 \right),
\end{equation}
for $a(w), \mu (w)$ holomorphic functions. If there exists $p \in M$ such that $K(p) \neq 0,$ then the set of planar points of $(M,f)$ is a discrete set.   
\end{prop}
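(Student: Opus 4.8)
The plan is to reduce the statement to the identity theorem for holomorphic functions. First I would observe that $(M,f)$ is nothing but the $\theta=0$ member of a $\theta$-family in the sense of Definition \ref{dfnfamily}: since $a=b$, we have $f_w=\mu\,W(a,a)=\mu(2a,1+a^2,i(1-a^2),0)$, and both $\mu$ and $a$ are holomorphic. Hence Theorem \ref{lema2.5} applies to $(M,f)$. In particular, the curvature formula (\ref{curvGausslema2.5}) specializes at $\theta=0$ to
\[
K(f)(w)=\frac{|a'(w)|^{2}\,\bigl(1-|a(w)|^{2}\bigr)^{2}}{|\mu(w)|^{2}\,\bigl(1-|a(w)|^{2}\bigr)^{6}},
\]
and since nonsingularity of the induced metric forces $\mu(w)\neq 0$ and $1-|a(w)|^{2}\neq 0$ for every $w\in M$, the factor multiplying $|a'(w)|^{2}$ is a strictly positive smooth function on $M$. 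Consequently
\[
\mathrm{Pl}(M,f)=\{\,w\in M:\ K(f)(w)=0\,\}=\{\,w\in M:\ a'(w)=0\,\},
\]
i.e.\ the set of planar points is exactly the zero set of $a'$ (this is precisely the characterization already recorded at the end of Theorem \ref{lema2.5}).

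Next I would note that, because holomorphy is invariant under the conformal coordinate changes defining the atlas $\A$, the derivative $a'=a_w$ is a globally well-defined holomorphic function on the connected Riemann surface $M$. The hypothesis supplies a point $p\in M$ with $K(f)(p)\neq 0$, which by the displayed identity gives $a'(p)\neq 0$; hence $a'$ is not identically zero on $M$.

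Finally I would invoke the classical fact that the zeros of a nonconstant holomorphic function are isolated — equivalently, the identity theorem on a connected Riemann surface: a holomorphic function that is not identically zero has a discrete zero set. Applying this to $a'$ yields that $\mathrm{Pl}(M,f)=\{a'=0\}$ is discrete, which is the assertion. There is essentially no analytic obstacle here; the only points requiring a little care are (i) checking that $a'$ is genuinely a single holomorphic function on all of $M$ rather than a collection of local data, and (ii) confirming that the metric-nonsingularity conditions $\mu\neq 0$ and $|a|\neq 1$ are in force so that $K(f)$ vanishes exactly where $a'$ does — both of which are immediate from the setup and from Theorem \ref{lema2.5}.
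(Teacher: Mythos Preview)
Your proof is correct and follows essentially the same route as the paper's: both arguments specialize the curvature formula (\ref{curvGausslema2.5}) at $\theta=0$ to identify $\mathrm{Pl}(M,f)$ with the zero set of the holomorphic function $a'$, use the hypothesis $K(p)\neq 0$ to guarantee $a'\not\equiv 0$, and conclude discreteness via the identity theorem. Your version is simply more explicit about why the multiplicative factor in front of $|a'|^2$ never vanishes and about the global well-definedness of $a'$ (the latter being automatic here since, as noted in the preliminaries, $M$ is taken to be either the disk or the plane).
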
 
\begin{proof}
First we observe that from equation (\ref{curvGausslema2.5}) for $\theta = 0$, it follows that $K(f(w)) = 0$ if and only if  $a'(w) = 0$. Therefore by hypothesis there exists a point $w_{0} = p$  where $a'(w_{0}) \neq 0$. Now, since $a'(w)$ is also a holomorphic function it follows that the set of its zeros is a discrete set, in all connected 
component of $p$. Finally, since $M$ is one of this connected components, so it follows the statement.
\end{proof}
We finish this paper establishing the next lemma related to coordinate transformation (\ref{eq36}). It shows that the restriction $\Im (a) \ne 0$ is needed condition to the existence of local isothermal coordinates  $(U, f)$ such that $f_w = \mu (2a, 1 + a^{2}, i(1 - a^{2}),0)$.

\begin{lemma}
Let $S = (U,X)$ be a minimal parametric surface of $\real_{1}^{3}$ given by $X(x,y) = (A^{1}(x,y),A^{2}(x,y),A^{3}(x,y))$. Assume there is a coordinate transformation (\ref{eq36})  which allow us to obtain locally a representation $(U,f)$ in  isothermal parameters such that 
$$f_{w} = \mu (2a, 1 + a^{2}, i(1 - a^{2}),0).$$ 
Then $\Im(a(w)) \neq 0$ for all $w \in U$. 
\end{lemma}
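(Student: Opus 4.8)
The plan is to write the Jacobian of the coordinate transformation (\ref{eq36}) explicitly in terms of $a$ and $\mu$, and then to use that, being an honest change of variables, this Jacobian cannot vanish. Denote the transformation (\ref{eq36}) by $z(w)=x(w)+iy(w)$, so that the isothermal representation is $f(u,v)=X\bigl(x(u,v),y(u,v)\bigr)$, and put $J\doteq x_{u}y_{v}-x_{v}y_{u}$. The first step is to observe that $J$ never vanishes on $U$. Indeed, the induced metric of $f$ is $\mathbf g=\lambda^{2}\,dw\,d\overline w$ with
$$\lambda^{2}=2\lpr{f_{w}}{\overline{f_{w}}}=4\mu\overline\mu\,(1-a\overline a)^{2},$$
which is strictly positive since $\mu\neq 0$ and $\{w:|a(w)|=1\}=\emptyset$; hence $f$ is a regular parametrization. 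Since $X$ is a regular parametrization as well, the chain rule $f_{u}=X_{x}x_{u}+X_{y}y_{u}$, $f_{v}=X_{x}x_{v}+X_{y}y_{v}$ exhibits the linearly independent pair $f_{u},f_{v}$ as the image of the linearly independent pair $X_{x},X_{y}$ under the linear map whose determinant is $J$; therefore $J\neq 0$ at every point of $U$.

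Next I would extract the derivatives of $z$. Writing $f=X\circ z$ componentwise and using that, for a graph of the form (\ref{17}), the first two coordinate functions of $X$ are the parameters themselves, the identity $f_{w}=\mu\,(2a,1+a^{2},i(1-a^{2}),0)$ forces $x_{w}=2\mu a$ and $y_{w}=\mu(1+a^{2})$, the first two equations of the system obtained in this section. Since $x$ and $y$ are real valued, $x_{\overline w}=\overline{x_{w}}=2\overline\mu\,\overline a$ and $y_{\overline w}=\overline{y_{w}}=\overline\mu(1+\overline a^{2})$, so
$$x_{w}y_{\overline w}-x_{\overline w}y_{w}=2\mu\overline\mu\bigl[a(1+\overline a^{2})-\overline a(1+a^{2})\bigr]=2\mu\overline\mu\,(a-\overline a)(1-a\overline a)=4i\,\mu\overline\mu\,(1-a\overline a)\,\Im(a),$$
which is exactly the identity recorded in the discussion preceding the lemma. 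On the other hand, from $\partial_{w}=\tfrac12(\partial_{u}-i\partial_{v})$ and $\partial_{\overline w}=\tfrac12(\partial_{u}+i\partial_{v})$ one computes directly that $x_{w}y_{\overline w}-x_{\overline w}y_{w}=\tfrac{i}{2}(x_{u}y_{v}-x_{v}y_{u})=\tfrac{i}{2}J$.

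Comparing the two expressions gives $J=8\,\mu\overline\mu\,(1-a\overline a)\,\Im(a)$. Since $J\neq 0$ on $U$ by the first step, while $\mu\overline\mu\neq 0$ and $1-a\overline a\neq 0$ hold throughout, it follows that $\Im(a(w))\neq 0$ for every $w\in U$, which is the claim. The only point requiring care is the first step: one must use that (\ref{eq36}) is a genuine coordinate transformation --- equivalently, that the isothermal representation $f$ is regular, which is guaranteed by $\lambda^{2}>0$ --- so that $J$ is forced to be nonvanishing. Granting this, the remainder is the short computation above, already essentially carried out in the discussion preceding the statement.
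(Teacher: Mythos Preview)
Your argument is correct and follows essentially the paper's route: both rely on the identity, displayed just before the lemma, that $x_{w}y_{\overline w}-x_{\overline w}y_{w}=4i\,|\mu|^{2}(1-|a|^{2})\,\Im(a)$ in the graph setting $X(x,y)=(x,y,A(x,y),0)$, together with the nonvanishing of the Jacobian of the coordinate change. The paper packages this via the chain-rule factorization of the three $2\times 2$ minors $\det\bigl(\partial(A^{i},A^{j})/\partial(w,\overline w)\bigr)$, whereas you compute the single Jacobian directly, but the substance is the same.
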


\begin{proof} By equation (\ref{eq37}) it follows that the Jacobian function 
$$\det\left(\frac{\partial(A^{i},A^{j})}{\partial(w,\overline{w})}\right) = \det\left(\frac{\partial(A^{i},A^{j})}{\partial(x,y)}\right) \left\vert
\begin{matrix}
x_{w} & x_{\overline{w}} \\ y_{w} & y_{\overline{w}}
\end{matrix} \right\vert$$ 
is equal to zero, for all the three possible cases where $1 \leq i < j \leq 3$, if and only if $\Im(a) = 0$.     
\end{proof}

{\bf Acknowledgments} The first author's research was supported by Projeto Tem\'atico
Fapesp n. 2016/23746-6. S\~ao Paulo. Brazil.

\end{document}